\definecolor{darkred}{rgb}{1,0,0} 
\definecolor{darkgreen}{rgb}{0,1,0}
\definecolor{darkblue}{rgb}{0,0,1}
\newcommand{\udots}{\mathinner{\mskip1mu\raise1pt\vbox{\kern7pt\hbox{.}}
\mskip2mu\raise4pt\hbox{.}\mskip2mu\raise7pt\hbox{.}\mskip1mu}}
\newcommand{\SH}{{\mathcal{H}}}
\newcommand{\SL}{{\mathcal{L}}}
\newcommand{\SM}{{\mathcal{M}}}
\newcommand{\SP}{{\mathcal{P}}}
\newcommand{\ST}{{\mathcal{T}}}
\newcommand{\KK}{\mathbb{K}}
\newcommand{\LL}{\mathbb{L}}
\newcommand{\ZZ}{\mathbb{Z}}
\newcommand{\NN}{\mathbb{N}}
\newcommand{\QQ}{\mathbb{Q}}
\newcommand{\Sym}{\operatorname{Sym}}
\newcommand{\Alt}{\operatorname{Alt}}
\newcommand{\rk}{\operatorname{rk}}
\newcommand{\Jac}{\operatorname{Jac}}
\newcommand{\coeff}{\mathop{\mathrm{coeff}}}
\newcommand{\VarK}{\mathcal{V}ar_{\KK}}
\newcommand{\ChowK}{\mathcal{CM}_{\KK}}
\newcommand{\SLrings}{\SL_{\op{rings}}}
\newcommand{\SLlambda}{\SL_{\lambda-\op{rings}}}
\newcommand{\SLdrings}{\SLrings^{\op{div}}}
\newcommand{\SLdlambda}{\SLlambda^{\op{div}}}
\newcommand{\depth}{\op{depth}}
\newcommand{\op}{\operatorname}
\newtheorem{proposition}{Proposition}[section]
\newtheorem{theorem}[proposition]{Theorem}
\newtheorem{definition}[proposition]{Definition}
\newtheorem{lemma}[proposition]{Lemma}
\newtheorem{conjecture}[proposition]{Conjecture}
\newtheorem{corollary}[proposition]{Corollary}
\newtheorem{remark}[proposition]{Remark}
\newtheorem{example}[proposition]{Example}
\numberwithin{equation}{section}
\title{Simplification of $\lambda$-ring expressions in the Grothendieck ring of Chow motives}
\author[D. Alfaya]{David Alfaya}
\address{D. Alfaya, 
\newline\indent
Department of Applied Mathematics and Institute for Research in Technology, ICAI School of Engineering, Comillas Pontifical University, C/Alberto Aguilera 25, 28015 Madrid, Spain}
\email{dalfaya@comillas.edu}
\keywords{lambda-rings, symbolic computations of motives, Chow motives, moduli spaces, Higgs bundles moduli space}
\subjclass[2020]{13D15, 68W30, 19E08, 14C35, 14D20, 14H60}
\begin{document}

\begin{abstract}
The Grothendieck ring of Chow motives admits two natural opposite $\lambda$-ring structures, one of which is a special structure allowing the definition of Adams operations on the ring. In this work I present algorithms which allow an effective simplification of expressions that involve both $\lambda$-ring structures, as well as Adams operations. In particular, these algorithms allow the symbolic simplification of algebraic expressions in the sub-$\lambda$-ring of motives generated by a finite set of curves into polynomial expressions in a small set of motivic generators. As a consequence, the explicit computation of motives of some moduli spaces is performed, allowing the computational verification of some conjectural formulas for these spaces.
\end{abstract}

\maketitle

\tableofcontents

\section{Introduction}
A $\lambda$-ring is an abelian unital ring endowed with a set of maps $\lambda^n$ for each natural number which satisfy the identities 
$$\lambda^0(x)=1, \quad \quad \quad \lambda^1(x)=x, \quad \quad \quad \lambda^n(x+y)=\sum_{i=0}^n \lambda^i(x) \lambda^{n-i}(x)$$
These rings, first introduced by Grothendieck, generalize simultaneously several common structures in mathematics. For example, binomial coefficients $\binom{x}{n}$ give a $\lambda$-ring structure on $\ZZ$ and symmetric powers and exterior powers of bundles induce different $\lambda$-ring structures on the K-theory of a manifold.

In algebraic geometry, $\lambda$-ring structures play an important role in invariant computations and, more precisely, in computations of motives of algebraic varieties. In addition to its relation with K-theory, symmetric powers of varieties induce a natural $\lambda$-ring structure on both Grothendieck's ring of varieties $K_0(\VarK)$ and several spaces of motives, such as the ring of Chow motives $\ChowK$ and its Grothendieck ring $K_0(\ChowK)$ \cite{H07}. In particular, it is common for the motive of a variety to be described as an algebraic expression in the corresponding $\lambda$-ring.

In particular, this type of $\lambda$-expressions appear naturally in the computation of motivic classes of different types of moduli spaces. For example, the virtual classes in $K_0(\ChowK)$ of moduli spaces of vector bundles of rank $n=2,3$ over a curve $X$ with fixed degree and fixed determinant are obtained as algebraic functions of the classes of symmetric powers of the curve $C$ and the Lefschetz motive $\LL$ \cite{GPHS11,S14,Lee18,GL20}. This also holds for moduli spaces of Higgs bundles \cite{GPHS11} and, conjecturally, for the virtual class of moduli stacks of vector bundles of any rank in the completion of the Grothendieck ring of varieties \cite{BD07}.

Nevertheless, manipulating, simplifying and comparing these type of expressions can be difficult. Expanding $\lambda$ operations on an arbitrary $\lambda$-expression in a general $\lambda$-ring is not always entirely possible, as expressions like $\lambda^n(xy)$ or $\lambda^i(\lambda^j(x))$ might not be simplifiable unless the $\lambda$-structure satisfies additional axioms (e.g., providing a special $\lambda$-ring structure). Moreover, expressions involved in motive computations sometimes involve more than one $\lambda$-structure (typically, a $\lambda$-structure, its opposite structure and its associated Adams operations). Even when possible, expanding these operations directly from the axioms can be computationally demanding.

In this paper, an algorithm for simplifying arbitrary algebraic expressions involving a $\lambda$-ring, its opposite $\lambda$-structure and its associated Adams operations is obtained under the condition that the opposite $\lambda$-ring is special. More precisely, the algorithm transforms any such expression into an integral polynomial in $\lambda$-powers of elementary generators of the original expression (see Algorithm \ref{alg:simpAlg} and Theorems \ref{thm:exprSimplification} and \ref{thm:mainAlgorithm}).

\begin{theorem}
Let $\SLlambda=(+,-,\cdot,0,1)\cup \{\lambda^n,\sigma^n,\psi^n\}_{n\in \NN}$ be the language of rings with two $\lambda$-structures $\lambda$ and $\sigma$ and Adams operations $\psi$, and let $\ST$ be the theory of $\lambda$-rings $(R,\lambda)$ whose opposite $\lambda$-structure, $\sigma$ is special. Then for each expression $\varphi(x_1,\ldots,x_n)$ in the language $\SLlambda$, there exists a unique integral polynomial $P_\varphi^\lambda\in \ZZ[x_{1,1},\ldots,x_{1,d_1},\ldots,x_{n,1},\ldots,x_{n,d_n}]$ such that
$$\ST \models \forall x_1,\ldots, x_n \, \varphi(x_1,\ldots,x_n)=P_{\varphi}^\lambda(\lambda^1(x_1),\ldots, \lambda^{d_1}(x_1),\ldots,\lambda^1(x_n),\ldots,\lambda^{d_n}(x_n))$$
Moreover, for torsion free rings, Algorithm \ref{alg:simpAlg} finds the polynomial $P_{\varphi}^\lambda$ from every expression $\varphi$.
\end{theorem}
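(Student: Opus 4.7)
My plan is to argue by structural induction on the $\SLlambda$-expression $\varphi$, using the identity $\lambda_{-t}(x)\sigma_t(x)=1$ to interchange the two $\lambda$-structures at the level of individual variables, and exploiting the specialty of $\sigma$ to handle compound sub-expressions. Three universal integer polynomial families form the basic dictionary: the pair $\sigma^n(x)=Q_n(\lambda^1(x),\ldots,\lambda^n(x))$ and $\lambda^n(x)=R_n(\sigma^1(x),\ldots,\sigma^n(x))$ coming from the opposite relation, together with Newton's identities expressing $\psi^n(x)$ as an integer polynomial in $\sigma^1(x),\ldots,\sigma^n(x)$. Each of these is a single-variable identity, so its effect is to trade an occurrence of $\lambda^n(x_j)$, $\sigma^n(x_j)$ or $\psi^n(x_j)$ for a polynomial in the $\sigma^i(x_j)$ (or in the $\lambda^i(x_j)$) of the same variable, up to bounded depth.

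The induction is arranged so as to produce, at each intermediate stage, a representation as a polynomial in the $\sigma^i(x_j)$. The atomic, sum, difference and product cases are immediate. When the outer symbol is $\lambda^n$, $\sigma^n$ or $\psi^n$ applied to a sub-expression already reduced to a polynomial $F$ in $\sigma^i(x_j)$'s, the first and third cases are first rewritten as polynomials in $\sigma^k(F)$'s via the dictionary, leaving the task of reducing $\sigma^n(F)$. This is done by iteratively applying the universal rules for a special $\lambda$-structure: $\sigma^n(a+b)=\sum_{i+j=n}\sigma^i(a)\sigma^j(b)$, $\sigma^n(ab)=P_n^{\times}(\sigma^{\bullet}(a),\sigma^{\bullet}(b))$ and $\sigma^n(\sigma^m(a))=P_{n,m}^{\circ}(\sigma^{\bullet}(a))$, the last two being precisely the specialty axioms on $\sigma$. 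A syntactic induction on $\varphi$ bounds the maximal index appearing in each variable by an effective integer $d_j$, ensuring termination; a final substitution $\sigma^k(x_j)=Q_k(\lambda^1(x_j),\ldots,\lambda^k(x_j))$ delivers the desired $P_\varphi^\lambda\in \ZZ[x_{j,k}]$.

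For uniqueness, the natural universal model is the tensor product $\Lambda^{\otimes n}$ of $n$ copies of the ring of symmetric functions, each carrying its standard special $\lambda$-structure; this object belongs to $\ST$, and within it the elements $\lambda^i(x_j)$, realized as elementary symmetric functions on disjoint variable sets, are algebraically independent over $\ZZ$, so any two integer polynomials agreeing on them must coincide.

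The main obstacle, and the reason the algorithmic part is restricted to torsion-free targets, is that explicit formulas for $P_n^{\times}$, $P_{n,m}^{\circ}$ and Newton's identities are most efficiently obtained via $\psi$-based generating functions, which introduce denominators in intermediate steps. The existence argument above guarantees integrality of the final polynomial, but an effective algorithm must manipulate rational polynomials and divide by integers along the way, an operation unambiguous only when the ring is torsion-free. The most delicate step in verifying Algorithm \ref{alg:simpAlg} is therefore the bookkeeping of these denominators and the proof that they cancel at the end, so that the output is not merely a polynomial with rational coefficients but exactly the integral polynomial $P_\varphi^\lambda$ whose existence is established universally.
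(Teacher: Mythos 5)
Your proposal is correct and follows essentially the same path as the paper: structural induction reducing each sub-expression to an integer polynomial in the $\sigma^i(x_j)$, using the specialty axioms for $\sigma$ (the additivity, product, and composition rules), the universal single-variable dictionaries between $\lambda$, $\sigma$, and $\psi$, a final conversion from $\sigma$ to $\lambda$ via the opposite polynomials, and uniqueness through a universal model where the $\lambda^i(x_j)$ are algebraically independent (the paper phrases this as the free special $\lambda$-ring, which is exactly your $\Lambda^{\otimes n}$). The only thing worth spelling out a little more is the negation case inside a $\sigma^n$ — the paper handles $\sigma^n(-a)$ via $\sigma^n(-a)=(-1)^n\lambda^n(a)$ and the opposite polynomials $P^{op}_n$, keeping all intermediate coefficients integral — but this is exactly the same ingredient you already have in your dictionary, so there is no gap of substance.
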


In particular, this is the situation happening in the Grothendieck ring of Chow motives, where the $\lambda$-ring is induced by symmetric powers of varieties. This $\lambda$-ring is not special itself, so, in principle, simplification of arbitrary expressions would be challenging, but in \cite{H07} it is proven that its opposite $\lambda$-structure is indeed special, so the proposed algorithm can be applied. In particular, incorporating the properties of motives of curves, we obtain a way to reduce any motivic expression generated by a finite set of curves into an integral polynomial in a finite set of motivic generators (See Theorem \ref{thm:exprSimpMotives} and Algorithm \ref{alg:motiveSimpAlg})

\begin{theorem}
Let $X_1,\ldots,X_n$ be smooth projective curves over a field $\KK$ of characteristic $0$. Then any expression $P_\varphi$ in the sub-$(\lambda,\sigma,\psi)$-ring of the completion of the Grothendieck ring of Chow motives $\hat{K}_0(\ChowK)$ spanned by the motives of the curves and the Lefschetz motive $\LL=[\mathbb{A}^1]$ is simplified by Algorithm \ref{alg:motiveSimpAlg} into an integral polynomial $P_\varphi$ such that
$$\varphi(\LL,[X_1],\ldots,[X_n])=P_\varphi(\LL,a_{1,1},\ldots,a_{1,g_1},\ldots,a_{n,g_n})$$
where $a_{i,k}=\lambda^k([h^1(X_i)])$.
\end{theorem}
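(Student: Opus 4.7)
The plan is to combine the general simplification theorem stated in the excerpt with the geometric structure of Chow motives of smooth projective curves. First, since the opposite $\lambda$-structure on $\hat{K}_0(\ChowK)$ is special by \cite{H07}, the hypotheses of the general simplification theorem are satisfied. Applying it (with $\LL$ and $[X_1],\ldots,[X_n]$ as the elementary generators) rewrites $\varphi$ as an integral polynomial in the finite set of variables $\lambda^k(\LL)$ and $\lambda^k([X_i])$. It then suffices to express each of these, in a uniform way, as an integral polynomial in $\LL$ and the generators $a_{i,j}=\lambda^j([h^1(X_i)])$ with $1\leq j\leq g_i$.

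For the Lefschetz motive the reduction is immediate: $\Sym^k(\mathbb{A}^1)\cong\mathbb{A}^k$ gives $\lambda^k(\LL)=\LL^k$. For a curve $X_i$, I would use the Chow motive decomposition $[X_i]=1+[h^1(X_i)]+\LL$ together with the additivity identity
\[
\lambda^k([X_i])=\sum_{a+b+c=k}\lambda^a(1)\,\lambda^b([h^1(X_i)])\,\lambda^c(\LL),
\]
the identity $\lambda^a(1)=1$ (symmetric powers of a point are a point) and the previous reduction of $\lambda^c(\LL)$, to rewrite $\lambda^k([X_i])$ as a polynomial in $\LL$ and $\lambda^b([h^1(X_i)])$ with $0\leq b\leq k$.

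The central obstacle, and the key geometric input, is the reduction of $\lambda^b([h^1(X_i)])$ for $b>g_i$ to polynomial expressions in the $a_{i,j}$ with $j\leq g_i$ and in $\LL$. I would handle this via Poincaré duality on the motive of the Jacobian: from the decomposition $h(\Jac(X_i))\cong\bigoplus_{k=0}^{2g_i}\Lambda^k[h^1(X_i)]$, the identification $\Lambda^{2g_i}[h^1(X_i)]\cong\LL^{g_i}$ and the non-degenerate pairing $\Lambda^j\otimes\Lambda^{2g_i-j}\to\Lambda^{2g_i}$, one obtains in the opposite $\lambda$-structure $\sigma$ the identities
\[
\sigma^{2g_i-j}([h^1(X_i)])=\LL^{g_i-j}\,\sigma^j([h^1(X_i)])\ \text{ for }0\leq j\leq g_i, \qquad \sigma^k([h^1(X_i)])=0\ \text{ for }k>2g_i.
\]
Translating these back through the universal polynomial relations between $\sigma^n$ and $\lambda^1,\ldots,\lambda^n$ derived from $\sigma_t\lambda_{-t}=1$, one obtains a recursive procedure: for each $b>g_i$, substituting the Newton-type polynomial expressions for $\sigma^b$ and $\sigma^{2g_i-b}$ (or $\sigma^b=0$ when $b>2g_i$) into the above identities and solving for the leading term $\lambda^b$ expresses it as an integral polynomial in $\lambda^1([h^1(X_i)]),\ldots,\lambda^{b-1}([h^1(X_i)])$ and $\LL$. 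Iterating the reduction down to $b\leq g_i$ produces the desired polynomial in the $a_{i,j}$ and $\LL$. Algorithm \ref{alg:motiveSimpAlg} is then expected to be the composition of the general simplification algorithm with a routine implementing these curve-specific reductions, yielding the polynomial $P_\varphi$ of the statement.
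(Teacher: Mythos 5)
Your overall strategy closely mirrors the paper's: decompose $[X_i]=1+[h^1(X_i)]+\LL$, invoke the abstract simplification theorem (Theorem \ref{thm:exprSimplification}) to produce an integral polynomial in $\lambda$-powers, and then use the geometry of curve motives to reduce $\lambda^k([h^1(X_i)])$ for $k>g_i$ to expressions in $a_{i,1},\ldots,a_{i,g_i}$ and $\LL$. (The paper substitutes $[X_i]=1+a_1(X_i)+\LL$ \emph{before} applying Theorem \ref{thm:exprSimplification} rather than after, but this reordering is harmless.)

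There is, however, a genuine gap in your key reduction step. The Poincaré-duality and vanishing relations you write down are stated for the opposite structure $\sigma$, but the correct relations — and the ones the algorithm actually uses — hold for $\lambda$, namely
$$\lambda^k([h^1(X_i)])=\LL^{k-g_i}\,\lambda^{2g_i-k}([h^1(X_i)])\ \text{ for }g_i<k\le 2g_i,\qquad \lambda^k([h^1(X_i)])=0\ \text{ for }k>2g_i.$$
The confusion is understandable: since $\sigma^n=[\Alt^n]$ is defined by the alternating idempotent, it is tempting to match it with the exterior powers $\Lambda^k H^1(X_i)$ in the decomposition of $h(\Jac(X_i))$. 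But $h^1(X_i)$ lies in odd cohomological degree, so the naive tensor swap on $h^1(X_i)^{\otimes n}$ acquires a Künneth sign under realization; consequently it is the \emph{symmetric} idempotent (i.e.\ $\lambda^n=[\Sym^n]$) on $h^1(X_i)$ that realizes to $\Lambda^n H^1(X_i)$, and hence $\lambda$, not $\sigma$, is the structure whose operations on $[h^1(X_i)]$ vanish beyond degree $2g_i$. Indeed, since $\sigma_t([h^1(X_i)])=\lambda_{-t}([h^1(X_i)])^{-1}$ is the reciprocal of a genuine degree-$2g_i$ polynomial, $\sigma^k([h^1(X_i)])$ does \emph{not} vanish for $k>2g_i$, so your stated identities are false and the subsequent plan — expressing $\sigma^b$ via Newton-type polynomials and solving for $\lambda^b$ — would proceed from incorrect premises. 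The paper instead derives the $\lambda$-relations directly from Kapranov's functional equation $Z_X(1/(\LL t))=\LL^{1-g}t^{2-2g}Z_X(t)$, which yields them in exactly the form needed for direct substitution and avoids the $\sigma$-to-$\lambda$ detour entirely.
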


As an application of this theorem, we will verify computationally a conjectural formula stated by Mozgovoy \cite{Moz12} for the moduli space of twisted Higgs bundles on a curve in low rank, genus and degree of the twisting line bundle. To check the result, we will compare it with the recent motivic formula for such moduli spaces obtained in \cite{AO21}. In principle, both formulas are very different and involve nontrivial expressions in the $\lambda$-ring of motives. Moreover, in the case of Mozgovoy's conjecture, the expression involves the application of Adams operations for the opposite structure of the symmetric $\lambda$-ring structure on several $\lambda$-ring expressions. Using the proposed algorithm, both formulas are converted into integral polynomials on the same set of generators, which depend solely on the curve. The resulting polynomials are verified to coincide, thus proving the conjectural formula for the motive.

The structure of the paper is the following. We start recalling the definitions of $\lambda$-structure, opposite $\lambda$-structure and Adams operations, as well as the algebraic relations between them (Section \ref{section:lambdaRings}). Section \ref{section:algorithm} includes the main simplification theorems for $\SLlambda$-terms and the proposed simplification algorithm for abstract $\lambda$-rings $(R,\lambda)$ whose opposite $\lambda$-structure is special. These results are then particularized in Section \ref{section:KChow} for the $\lambda$-structure on the Grothendieck ring of Chow motives induced by taking symmetric powers of varieties. As an application, in Section \ref{section:Higgs} we will apply the proposed algorithm to prove computationally that Mozgovoy's conjectural formula for the motive of the moduli space of $L$-twisted Higgs bundles holds in low genus, rank and degree. Finally, as an annex, some of the obtained explicit simplified polynomial formulas for the motives of such moduli spaces of twisted Higgs bundles are included.

\noindent\textbf{Acknowledgments.} 
This research was funded by MICINN grant PID2019-108936GB-C21. I would like to thank André Oliveira for useful discussions.

\section{$\lambda$-rings and Adams operations}
\label{section:lambdaRings}
We will start by a brief reminder about $\lambda$-ring structures (c.f. \cite{Knut73}, \cite{Gri19}). Let $R$ be an abelian unital ring.
\begin{definition}
\label{def:lambdaRing}
A $\lambda$-ring structure on $R$ is set of maps (not necesarily homomorphisms) $\lambda^i: R\longrightarrow R$ indexed by natural numbers $i\in \mathbb{N}$ which satisfy the following properties.
\begin{enumerate}
\item For all $x\in R$, $\lambda^0(x)=1$ and $\lambda^1(x)=x$.
\item For all $x,y\in R$ and all $n\in \mathbb{N}$
$$\lambda^n(x+y)= \sum_{i=0}^n \lambda^i(x) \lambda^{n-i}(y)$$
\end{enumerate}
\end{definition}
Alternatively, we can codify the information of the $\lambda^i$ maps into a generating series
$$\lambda_t(x)= \sum_{i\ge 0} \lambda^i(x) t^i$$
In this context, condition (1) ensures that for each $x\in R$, $\lambda(x)\in 1+tR[[t]]$ and it is therefore an invertible series and condition (2) is equivalent to stating that for each $x,y\in R$ the following equality holds in $1+tR[[t]]$
$$\lambda_t(x+y)=\lambda_t(x)\lambda_t(y)$$
i.e. that the map $\lambda_t: (R,+)\longrightarrow (1+tR[[t]], \cdot)$ is a group homomorphism. From these properties one can deduce that for each $\lambda$-ring structure $\lambda$ on $R$, there exists an ``opposite'' $\lambda$-ring structure $\sigma$ given by the following relation
\begin{equation}
\label{eq:oppositeLambda}
\sigma_t(x)=\sum_{i\ge 0} \sigma^i(x)t^i = \left( \sum_{i\ge 0} \lambda^i(x) (-t)^i \right)^{-1}=(\lambda_{-t}(x))^{-1}
\end{equation}
In particular, as $1=\lambda_t(0)=\lambda_t(x)\lambda_t(-x)$, we have
$$\sigma_t(-x)=\lambda_{-t}(x)$$
so
\begin{equation}
\label{eq:sigma-x}
\sigma^n(-x)=(-1)^n\lambda^n(x)
\end{equation}

\begin{definition}
\label{def:specialLambdaRing}
A $\lambda$-ring is special if, moreover, for all $x,y\in R$ and all $n,m\in \mathbb{N}$
$$\lambda^n(xy)=P_n\left (\lambda^1(x),\ldots,\lambda^n(x),\lambda^1(y),\ldots,\lambda^n(y) \right)$$
$$\lambda^n(\lambda^m(x))=P_{n,m} \left (\lambda^1(x),\ldots,\lambda^{nm}(x) \right)$$
where $P_n\in \mathbb{Z}(X_1,\ldots,X_n,Y_1,\ldots,Y_n)$ and $P_{n,m}\in \mathbb{Z}(X_1,\ldots,X_{nm})$ are certain universal polynomials called Grothendieck polynomials, defined as follows. If
$$s_{n,m}(X_1,\ldots,X_m)=\sum_{1\le j_1 < \ldots < j_n \le m} \prod_{k=1}^n X_{j_k}$$
is the elementary symmetric polynomial of degree $n$ the $m$ variables $X_1,\ldots, X_m$, then
$$P_{n,m}\left (s_{1,nm}(\{X_i\}),\ldots,s_{nm,nm}(\{X_i\})\right)=\op{coeff}_{t^n} \prod_{1\le j_1<\ldots<j_m \le nm} \left(1+t \prod_{k=1}^m X_{j_k}\right)$$
$$P_n \left (s_{1,n}(\{X_i\}),\ldots, s_{n,n}(\{X_i\}),s_{1,n}(\{Y_i\}),\ldots,s_{n,n}(\{Y_i\}) \right) = \op{coeff}_{t^n} \prod_{i,j=1}^n (1+tX_iY_j)$$
\end{definition}

If $\lambda$ is special, then a certain set of well-behaved endomorphisms of the $\lambda$-ring $(R,\lambda)$ called Adams operations can be constructed as follows.

\begin{definition}
\label{def:AdamsOperations}
Let $\lambda$ be a $\lambda$-ring structure on the ring $R$. Let $N_n\in \ZZ[s_1,\ldots,s_n]$ be the $n$-th Hirzebruch-Newton polynomial, defined as follows. It is the unique polynomial such that for each $m\in \NN$
$$\sum_{i=1}^m X_i^n = N_n(s_{1,m}(\{X_i\}),\ldots,s_{n,m}(\{X_i\})) \in \ZZ[X_1,\ldots,X_m]$$
Then for each $n\in \NN$, define the $n$-th Adams operation of the $\lambda$-ring $(R,\lambda)$ as
$$\psi_n(x)=N_n(\lambda^1(x),\ldots,\lambda^n(x))$$
\end{definition}

\begin{proposition}[c.f. {\cite[Theorem 9.2]{Gri19}}]
\label{prop:psiHomo}
If $\lambda$ is special, then 
\begin{enumerate}
\item For every $j\in \NN$, $\psi_j:(R;\lambda) \to (R,\lambda)$ is a $\lambda$-ring homomorphism.
\item For every $x\in R$, $\psi_1(x)=x$.
\item For every $i,j\in \NN$, $\psi_i \circ \psi_j = \psi_{ij}$.
\end{enumerate}
\end{proposition}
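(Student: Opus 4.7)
Part (2) is immediate: Newton's identity in one variable gives $N_1(s_1)=s_1$, so $\psi_1(x)=N_1(\lambda^1(x))=\lambda^1(x)=x$.

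For parts (1) and (3) my plan is to reduce everything to polynomial identities that I can check on sums of ``line elements'', using the splitting principle available because $\lambda$ is special. Concretely, I would work inside the universal $\lambda$-ring $\ZZ[X_1,X_2,\ldots]$ with $\lambda_t(X_i)=1+X_it$; each generator $X_i$ then satisfies $\lambda^k(X_i)=0$ for $k\geq 2$, so $\psi_n(X_i)=N_n(X_i,0,\ldots,0)=X_i^n$. Any identity between $\lambda^k$'s, $\psi_n$'s and the ring operations that I can verify on sums $X_1+\cdots+X_r$ then translates into an identity of universal polynomials in $\lambda^1,\lambda^2,\ldots$, and specializes to an identity in every special $\lambda$-ring.

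Writing $x=\alpha_1+\cdots+\alpha_r$ and $y=\beta_1+\cdots+\beta_s$ in the universal setting, Newton's identities give $\psi_n(x)=\sum_i\alpha_i^n$ and $\lambda^k(x)=e_k(\alpha_1,\ldots,\alpha_r)$. Additivity of $\psi_n$ is then immediate, since $x+y$ splits with root set $\{\alpha_i\}\cup\{\beta_j\}$; equivalently, it follows from $\lambda_t(x+y)=\lambda_t(x)\lambda_t(y)$ by taking the logarithmic derivative at $-t$ to convert the product into the sum $\sum_n\psi_n t^{n-1}$. Multiplicativity rests on the special axiom for $\lambda^n(xy)$: by the defining identity of the polynomials $P_n$, the product $xy$ splits with roots $\alpha_i\beta_j$, so $\psi_n(xy)=\sum_{i,j}(\alpha_i\beta_j)^n=\psi_n(x)\psi_n(y)$. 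Compatibility $\psi_n\circ\lambda^k=\lambda^k\circ\psi_n$ goes the same way: the polynomial $P_{n,k}$ encodes that $\lambda^k(x)$ splits with roots the squarefree products $\alpha_{i_1}\cdots\alpha_{i_k}$ with $i_1<\cdots<i_k$, whose $n$-th power sum equals $e_k(\alpha_1^n,\ldots,\alpha_r^n)=\lambda^k(\psi_n(x))$. For part (3), each $\psi_j(\alpha_i)=\alpha_i^j$ is again a line element, so by additivity of $\psi_i$ I get $\psi_i(\psi_j(x))=\sum_k\psi_i(\alpha_k^j)=\sum_k\alpha_k^{ij}=\psi_{ij}(x)$.

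The main obstacle is giving rigorous status to the splitting principle, since an arbitrary element of $R$ cannot literally be written as a sum of line elements: each manipulation above has to be re-expressed as an identity of universal polynomials in $\ZZ[X_1,\ldots,X_N]$, where splitting is tautological, and then specialized through the $\lambda$-ring structure maps of $R$. This is the standard Grothendieck argument carried out carefully in \cite[Theorem 9.2]{Gri19}, and once that framework is in place each of the three assertions drops out of the computations above.
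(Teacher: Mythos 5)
The paper states this proposition without proof, giving only a ``c.f.'' pointer to \cite[Theorem 9.2]{Gri19}, so there is no in-paper argument to compare against; the most I can do is assess your sketch and note that it is essentially the argument the cited reference carries out. Your outline is correct: part (2) is $N_1=s_1$; additivity of $\psi_n$ follows from the multiplicativity of $\lambda_t$ (and indeed does not even require the special axiom, as your logarithmic-derivative remark makes clear); multiplicativity and compatibility with $\lambda^k$ are exactly where the Grothendieck polynomials $P_n$ and $P_{n,k}$ of Definition \ref{def:specialLambdaRing} enter, encoding that $xy$ and $\lambda^k(x)$ ``split'' with roots $\alpha_i\beta_j$ and $\alpha_{i_1}\cdots\alpha_{i_k}$ respectively; and part (3) follows because $\psi_j$ sends line elements to line elements together with additivity. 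You also correctly isolate the one load-bearing input, the verification/splitting principle — that an identity of $\lambda$-ring expressions holds in every special $\lambda$-ring once it holds for sums of line elements in $\ZZ[X_1,\ldots,X_N]$, equivalently that the free special $\lambda$-ring on one generator injects into symmetric functions — and sensibly defer its rigorous treatment to \cite{Gri19} rather than re-deriving it. So the proposal is a faithful, correct summary of the standard proof that the paper implicitly endorses by citation.
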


From this point on, let $(R,\lambda)$ be a $\lambda$-ring, and assume that the opposite $\lambda$-ring structure, which will be called $\sigma$, is special. Let $\psi_n$ be the Adams operations associated with such special $\lambda$-structure $\sigma$. Let us recall some algebraic relations between these three families of maps, $\lambda^n$,  $\sigma^n$ and $\psi_n$. Although most of these relations are well known in the literature (c.f. \cite{Knut73}, \cite{Gri19}), a proof of each of them is included as, in addition to demonstrating the corresponding results, they present some effective recursive methods for the computation of the maps which will be used in the main simplification algorithm.

First of all, solving the triangular system of equations defined by Equation \eqref{eq:oppositeLambda} and taking into account that the opposite structure of $\sigma$ is $\lambda$ yields the following.

\begin{proposition}
\label{prop:oppositePol}
For each $n>0$ there exists a degree $n$ polynomial $P^{op}_n(x_1,\ldots,x_n)\in \ZZ[x_1,\ldots,x_n]$ such that for all $x\in R$
$$\sigma^n(x)=P^{op}_n(\lambda^1(x),\ldots,\lambda^n(x))$$
$$\lambda^n(x)=P^{op}_n(\sigma^1(x),\ldots,\sigma^n(x))$$
and the polynomials $P^{op}_n$ satisfy the following recursive relation.
\begin{align*}
P^{op}_0&=1\\
P^{op}_n&=\sum_{i=0}^{n-1} P^{op}_i x_{n-i} (-1)^{n-i+1} \quad \quad \forall n>1
\end{align*}
\end{proposition}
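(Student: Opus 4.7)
The plan is to expand the identity $\sigma_t(x)\lambda_{-t}(x)=1$ from Equation~\eqref{eq:oppositeLambda} as a power series identity in $1+tR[[t]]$ and then to extract the coefficient of $t^n$ on the left-hand side. Writing
$$\Bigl(\sum_{i\ge 0}\sigma^i(x)t^i\Bigr)\Bigl(\sum_{j\ge 0}(-1)^j\lambda^j(x)t^j\Bigr)=1,$$
and equating the coefficient of $t^n$ to zero for each $n\ge 1$, I would obtain the identity
$$\sigma^n(x)=\sum_{i=0}^{n-1}(-1)^{n-i+1}\,\sigma^i(x)\lambda^{n-i}(x).$$
This is a triangular system that expresses each $\sigma^n(x)$ in terms of $\sigma^0(x),\ldots,\sigma^{n-1}(x)$ and $\lambda^1(x),\ldots,\lambda^n(x)$.

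Next I would define the polynomials $P^{op}_n\in \ZZ[x_1,\ldots,x_n]$ by the stated recursion with $P^{op}_0=1$, and prove by induction on $n$ that
$$\sigma^n(x)=P^{op}_n(\lambda^1(x),\ldots,\lambda^n(x))$$
for every $x\in R$. The base case $n=0$ is immediate ($\sigma^0(x)=1=P^{op}_0$), and the inductive step follows by substituting the inductive hypothesis for each $\sigma^i(x)$ with $i<n$ into the identity derived above. The degree statement also follows by induction: each summand $P^{op}_i\,x_{n-i}$ has degree at most $i+1\le n$, and by keeping track of the leading term (coming from $i=n-1$), one sees that the degree is exactly $n$.

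For the reverse identity, the key observation is that the construction of the opposite $\lambda$-structure is involutive: substituting $t\mapsto -t$ in $\sigma_t(x)\lambda_{-t}(x)=1$ yields $\lambda_t(x)\sigma_{-t}(x)=1$, so $\lambda$ is the opposite structure of $\sigma$. Therefore the same derivation with the roles of $\lambda$ and $\sigma$ interchanged produces exactly the same recursive polynomials, giving
$$\lambda^n(x)=P^{op}_n(\sigma^1(x),\ldots,\sigma^n(x)).$$
Uniqueness of $P^{op}_n$ could be noted to hold as an identity of formal polynomials (since the recursion determines them uniquely from $P^{op}_0=1$).

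There is no real obstacle here; the content of the proof is simply the observation that the defining relation $\sigma_t(x)\lambda_{-t}(x)=1$ is triangular in the coefficients and that inverting a power series with constant term $1$ is performed by the explicit Newton-style recursion written down in the statement. The only thing worth being careful about is the sign bookkeeping in $(-1)^{n-i+1}$, which comes from the $(-t)^j$ expansion of $\lambda_{-t}(x)$ followed by moving the $j=n-i$ term to the other side of the equation.
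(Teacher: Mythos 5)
Your proposal is correct and follows essentially the same approach as the paper: extract the coefficient of $t^n$ from $\sigma_t(x)\lambda_{-t}(x)=1$ to obtain the triangular recursion, define $P^{op}_n$ by that recursion and prove the identity by induction, handle the degree by tracking the $i=n-1$ term, and use the involutivity of the opposite construction to get the reverse identity for free. No gaps.
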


\begin{proof}
As the opposite $\lambda$-structure of $\sigma$ is $\lambda$, it is only necessary to prove that there exists a set of polynomials satisfyig the given recursive relation and such that for each $x\in R$
$$\sigma^n(x)=P^{op}_n(\lambda^1(x),\ldots,\lambda^n(x))$$
We will prove this by induction on $n$. For $n=0$, the axioms imply that $\sigma^0(x)=1=\lambda^0(x)$ for each $x\in R$, so $P^{op}_0=1$ as stated. Let $n>0$ and suppose that the polynomials $P^{op}_i$ exist for each $i< n$ and stisfy the requiered properties. For every $x\in R$ we have
$$1=\left( \sum_{i\ge 0} \sigma^i(x) t^i \right) \left ( \sum_{j\ge 0} \lambda^j(x) (-t)^j \right) = \sum_{n\ge 0} \sum_{i=0}^n \left(\sigma^i(x) \lambda^{n-i}(x) (-1)^{n-i} \right) t^k$$
Thus, for every $n>0$
$$\sigma^n(x) + \sum_{i=0}^{n-1} \sigma^i(x) \lambda^{n-i}(x) (-1)^{n-i}=0$$
And, therefore
$$\sigma^n(x) = \sum_{i=0}^{n-1} \sigma^i(x) \lambda^{n-i}(x)(-1)^{n-i+1}$$
Now we can apply induction and write $\sigma^i(x) = P^{op}_i(\lambda^1(x),\ldots,\lambda^i(x))$, so
$$\sigma^n(x) = \sum_{i=0}^{n-1} P^{op}_i(\lambda^1(x),\ldots,\lambda^i(x)) \lambda^{n-i}(x)(-1)^{n-i+1}$$
As this holds for each $x\in R$, taking
$$P^{op}_n=\sum_{i=0}^{n-1} P^{op}_i x_{n-i} (-1)^{n-i+1}$$
we have $\sigma^n(x)=P^{op}_n(\lambda^1(x),\ldots,\lambda^n(x))$. Finally, the fact that the degree of $P^{op}_n$ is $n$ is trivially proven inductively, as from the recursive equation we have
$$\deg(P_n^{op}) \le \max_{1\le i<n}\{ \deg(P_i^{op})\}+1$$
Suppose that $\deg(P_i^{op})=i$ for all $i<n$. Then
$$\deg(P_n^{op}) \le \max_{1\le i<n}\{ \deg(P_i^{op})\}+1=n-1+1=n$$
and, moreover,
$$\deg(P^{op}_i x_{n-i})=i+1<n$$
for all $i<n-1$, so
$$\deg(P_n^{op})=\deg(P_{n-1}^{op}x_n)=n$$
\end{proof}

By construction, recall that the Hirzebruch-Newton polynomials $N_n$ provide an algebraic dependency of the Adamas operations in terms of the special $\lambda$-structure $\sigma$.
\begin{equation}
\label{eq:Psigmapsi}
\psi_n(x)=N_n(\sigma^1(x),\ldots,\sigma^n(x))
\end{equation}

The following presentation of Newton's identities
\begin{equation}
\label{eq:NewtonIdentities}
\psi_n(x)= (-1)^{n-1} n \sigma^n(x) -\sum_{i=1}^{n-1} (-1)^{n-i}\sigma^{n-i}(x) \psi_i(x) \quad \quad \forall x\in R
\end{equation}
provides a fast recursive method for computing such degree $n$ Newton polynomials
\begin{align}
\label{eq:recursionsigmapsi}
N_1&=x_1\\
N_n& = (-1)^{n-1} n x_n-\sum_{i=1}^{n-1} (-1)^{n-i} x_{n-i} N_i
\end{align}

If $R$ is a divisible torsion free ring, then it is well known that the process can be reverted and that the special $\lambda$-structure can be expressed as an algebraic combination of Adams operations

\begin{proposition}
\label{prop:psi2sigmaPols}
Let $(R,\sigma)$ be a special $\lambda$-ring such that $R$ is torsion free. Then for every $n>0$, there is a degree $n$ polynomial $L_n(x_1,\ldots,x_n)\in \QQ[x_1,\ldots,x_n]$ such that for each $x\in R$
$$\sigma^n(x)=L_n(\psi_1(x),\ldots,\psi_n(x))$$
satisfying the following recursive relation.
\begin{align*}
L_1&=x_1\\
L_n&=\frac{1}{n} \left( (-1)^{n-1} x_n+ \sum_{i=1}^{n-1} (-1)^{i-1} L_{n-i} x_i\right) \quad \quad \forall n>1
\end{align*}
In this expression, we write ``Y=$\frac{1}{n}$X'' to denote that there exists an element in $R$ which multiplied by $n$ yields $X$ (which is unique by torsion freeness of $R$) and that $Y$ is such element.
\end{proposition}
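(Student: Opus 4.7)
The plan is to proceed by strong induction on $n$, inverting Newton's identity \eqref{eq:NewtonIdentities} one power at a time. For $n=1$, the definitions give $\sigma^1(x)=x=\psi_1(x)$, so $L_1=x_1$ works. Fix $n>1$ and assume that for every $1\le k<n$ there is a polynomial $L_k\in\QQ[x_1,\ldots,x_k]$ of degree $k$ satisfying $\sigma^k(x)=L_k(\psi_1(x),\ldots,\psi_k(x))$ for all $x\in R$. Starting from Equation \eqref{eq:NewtonIdentities}, isolate $\sigma^n(x)$ by moving the sum to the other side and multiplying by $(-1)^{n-1}$; the sign identity $(-1)^{n-i}(-1)^{n-1}=(-1)^{i-1}$ produces
$$n\,\sigma^n(x)=(-1)^{n-1}\psi_n(x)+\sum_{i=1}^{n-1}(-1)^{i-1}\sigma^{n-i}(x)\psi_i(x).$$
Substituting the inductive hypothesis into each $\sigma^{n-i}(x)$ gives an equality of the form $n\,\sigma^n(x)=Q(\psi_1(x),\ldots,\psi_n(x))$, where
$$Q(x_1,\ldots,x_n)=(-1)^{n-1}x_n+\sum_{i=1}^{n-1}(-1)^{i-1}L_{n-i}(x_1,\ldots,x_{n-i})\,x_i\in\QQ[x_1,\ldots,x_n].$$

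The key step is to define $L_n:=\tfrac{1}{n}Q\in\QQ[x_1,\ldots,x_n]$ and check that this formal division by $n$ yields a valid identity in $R$ despite $R$ not being divisible. For any particular $x\in R$, the element $\sigma^n(x)\in R$ satisfies $n\,\sigma^n(x)=Q(\psi_1(x),\ldots,\psi_n(x))$ by the computation above, so $Q$ evaluated at $(\psi_1(x),\ldots,\psi_n(x))$ is in fact divisible by $n$ in $R$; torsion freeness guarantees the preimage is unique, so the convention ``$Y=\tfrac{1}{n}X$'' from the statement is well-defined and we get $L_n(\psi_1(x),\ldots,\psi_n(x))=\sigma^n(x)$ as required. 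This is the only delicate point, since $L_n$ has genuinely rational coefficients as a polynomial, yet must take values in $R$ when evaluated on Adams images.

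The recursion for $L_n$ is then exactly the one stated, and the degree bound $\deg L_n=n$ follows by the same inductive estimate used in the proof of Proposition \ref{prop:oppositePol}: from
$$n L_n=(-1)^{n-1}x_n+\sum_{i=1}^{n-1}(-1)^{i-1}L_{n-i}x_i,$$
each term $L_{n-i}x_i$ has degree $(n-i)+1\le n$, with equality only when $i=1$, so $\deg L_n=\deg(L_{n-1}x_1)=n$. The expected obstacle is purely the divisibility subtlety handled in the second paragraph; the rest is bookkeeping around Newton's identities, and the base case and degree tracking are routine.
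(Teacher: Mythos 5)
Your proof is correct and follows essentially the same route as the paper: rearrange Newton's identity \eqref{eq:NewtonIdentities} to isolate $n\,\sigma^n(x)$, apply the inductive hypothesis to $\sigma^{n-i}(x)$, and define $L_n$ by dividing by $n$, with torsion freeness guaranteeing well-definedness. Your degree argument is a slight variant (you observe that $L_{n-1}x_1$ is the unique summand of degree $n$, hence no cancellation, whereas the paper tracks the coefficient of $x_1^n$ directly), but both are standard bookkeeping and both are valid.
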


\begin{proof}
Rearanging Newton's identity \eqref{eq:NewtonIdentities} we have that for each $x\in R$
$$\sigma^n(x) = \frac{1}{n}\left ((-1)^{n-1} \psi_n(x)+ \sum_{i=1}^{n-1} (-1)^{i-1} \sigma^{n-i}(x) \psi_i(x) \right)$$
Moreover, $\sigma^1(x)=x=\psi_1(x)$, so $L_1=x_1$ and, inductively, it is clear that if $\sigma^k(x)=L_k(\psi_1(x),\ldots,\psi_k(x))$ for all $x\in R$ and all $k<n$, then if we define $L_n$ by the given recursion, the following holds for each $x\in R$.
\begin{multline*}
\sigma^n(x) = \frac{1}{n}\left ((-1)^{n-1} \psi_n(x)+ \sum_{i=1}^{n-1} (-1)^{i-1} \sigma^{n-i}(x) \psi_i(x) \right)\\
= \frac{1}{n}\left ((-1)^{n-1} \psi_n(x)+ \sum_{i=1}^{n-1} (-1)^{i-1} L_{n-i}(\psi_1(x),\ldots,\psi_{n-i}(x)) \psi_i(x) \right) = L_n(\psi_1(x),\ldots,\psi_n(x))
\end{multline*}
Finally, the degree $n$ assertion becomes straightforward from the recurrence relation. It is clear by induction that the degree is at most $n$. If we focus on the monomials of the form $x_1^k$ in the polynomials, it is straightforward to prove inductively that for each $n\ge 1$, $\coeff_{x_1^n}(L_n)\ne 0$. We know that $L_1=x_1$ and computing the coefficient of $x_1^n$ in $L_n$ in the recursion equation and taking into account that $\deg(L_k)<n$ for each $k<n$ yields
$$\coeff_{x_1^n} L_n = \frac{1}{n} (-1)^{n-1} \coeff_{x_1^{n-1}} L_{n-1}\ne 0 \quad \quad \forall n>1$$

\end{proof}

Combining the polynomials $L_n$ and $N_n$ with the opposite polynomials $P_k^{op}$ for $k\le n$, we can obtain similar relations between the Adams operations $\psi$ and the non-special $\lambda$-structure on the ring, $\lambda$.

\begin{corollary}
\label{cor:psi2lambdaPols}
Let $(R,\lambda)$ be a torsion free $\lambda$-ring such that the opposite $\lambda$-structure $\sigma$ is special. Then for every $n>0$, there is a degree $n$ polynomial $L_n^{op}(x_1,\ldots,x_n)\in \QQ[x_1,\ldots,x_n]$ such that for each $x\in R$
$$\lambda^n(x)=L_n^{op}(\psi_1(x),\ldots,\psi_n(x))$$
\end{corollary}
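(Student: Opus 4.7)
The plan is to obtain $L_n^{op}$ as a direct composition of the two polynomial families already constructed in the excerpt. By Proposition \ref{prop:oppositePol} we have $\lambda^n(x) = P_n^{op}(\sigma^1(x), \ldots, \sigma^n(x))$ with $\deg P_n^{op} = n$, and by Proposition \ref{prop:psi2sigmaPols}, since $\sigma$ is special and $R$ is torsion free, for each $k \geq 1$ we have $\sigma^k(x) = L_k(\psi_1(x), \ldots, \psi_k(x))$ with $L_k \in \QQ[x_1, \ldots, x_k]$ of degree $k$. Substituting the second identity into the first, I would set
$$L_n^{op}(y_1, \ldots, y_n) := P_n^{op}\bigl(L_1(y_1),\, L_2(y_1, y_2),\, \ldots,\, L_n(y_1, \ldots, y_n)\bigr) \in \QQ[y_1, \ldots, y_n],$$
from which the required identity $\lambda^n(x) = L_n^{op}(\psi_1(x), \ldots, \psi_n(x))$ is immediate for every $x \in R$.

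The only point that is not automatic is the claim $\deg L_n^{op} = n$. For the upper bound, I would first prove by induction on the recursion $P_n^{op} = \sum_{i=0}^{n-1} P_i^{op}\, x_{n-i} (-1)^{n-i+1}$ that every monomial of $P_n^{op}$ has weighted degree exactly $n$ when the variable $x_k$ is assigned weight $k$; since each $L_k$ has standard degree $k$, substituting $L_k$ for $x_k$ in such a monomial produces a polynomial of standard degree at most $n$, so $\deg L_n^{op} \leq n$. For the matching lower bound I would track the coefficient of $y_1^n$: a simple induction on the recursion defining $L_k$ gives $\coeff_{y_1^k}(L_k) = \tfrac{1}{k!}$, and then a short combinatorial check over the monomials of $P_n^{op}$ yields $\coeff_{y_1^n}(L_n^{op}) = \tfrac{1}{n!} \neq 0$.

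The existence statement is pure routine composition; the only (minor) obstacle is handling the degree, which I expect to be straightforward once the weighted-degree structure of $P_n^{op}$ is unwound. If the direct verification of the $y_1^n$-coefficient proves cumbersome, an alternative is to invoke the formal power-series identity $\lambda_t(x) = \exp\bigl(\sum_{n \geq 1} \tfrac{(-1)^{n-1}}{n} \psi_n(x)\, t^n\bigr)$, which holds in the torsion-free special setting (it follows by taking logarithms in $\sigma_t(x) = \lambda_{-t}(x)^{-1}$ and using Equation \eqref{eq:Psigmapsi}), and whose coefficient of $t^n$ is manifestly a $\QQ$-polynomial in $\psi_1(x), \ldots, \psi_n(x)$ with leading term $\tfrac{\psi_1(x)^n}{n!}$.
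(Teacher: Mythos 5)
Your proof takes the same route as the paper: the polynomial is defined by the composition $L_n^{op} = P_n^{op}(L_1, \ldots, L_n)$, which immediately gives the identity $\lambda^n(x)=L_n^{op}(\psi_1(x),\ldots,\psi_n(x))$. The only divergence is in how $\deg L_n^{op}=n$ is verified: the paper substitutes the recursion for $P_n^{op}$ from Proposition~\ref{prop:oppositePol} to derive the self-contained recurrence $L_n^{op}=\sum_{i=0}^{n-1}(-1)^{n-i+1}L_i^{op}L_{n-i}$ and argues inductively from there, while you use the weighted homogeneity of $P_n^{op}$ (giving the upper bound) together with a leading-coefficient computation $\coeff_{y_1^n}(L_n^{op})=1/n!$ (giving the lower bound); both are sound. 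One small correction to the alternative you offer at the end: the plethystic exponential for the non-special structure $\lambda$ is $\lambda_t(x)=\exp\bigl(\sum_{n\ge1}\tfrac{\psi_n(x)}{n}t^n\bigr)$, with all signs positive, whereas the formula you wrote with $(-1)^{n-1}$ is the one for $\sigma_t(x)$; since $\lambda_t(x)=\sigma_{-t}(x)^{-1}$, the two alternating sign contributions cancel. This sign does not affect your conclusion, as the leading term is still $\psi_1(x)^n/n!$.
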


\begin{proof}
The existence of the polynomial is clear, as we know that for each $x\in R$
$$\lambda^n(x)=P_n^{op}(\sigma^1(x),\ldots, \sigma^n(x)) = P_n^{op}(L_1(\psi_1(x)),\ldots, L_n(\psi_1(x),\ldots,\psi_n(x)))$$
so
$$L_i^{op}(x_1,\ldots,x_i)=P_n^{op}(L_1(x_1),\ldots, L_n(x_1,\ldots,x_n))\in \QQ[x_1,\ldots,x_n]$$
We can then use the recursive definition of $P_n^{op}$ to verfy inductively that the degree of the resulting polynomial is exactly $n$. For $n=1$ it is trivial, as $L_1^{op}=x_1$. Assume that $\deg(L_i^{op})=i$ for all $i<n$. Applying the recursive equation for $P_n^{op}$ from Proposition \ref{prop:oppositePol} yields
$$L^{op}_n=P_n^{op}(L_1,\ldots,L_n)=\sum_{i=0}^{n-1} P^{op}_i(L_1,\ldots,L_i) L_{n-i} (-1)^{n-i+1} = \sum_{i=0}^{n-1} L^{op}_i L_{n-i} (-1)^{n-i+1}$$
\end{proof}

\begin{corollary}
\label{cor:lambda2psiPols}
Let $(R,\lambda)$ be a $\lambda$-ring such that the opposite structure $\sigma$ is special. Then for every $n>0$, there is a degree $n$ polynomial $N_n^{op}(x_1,\ldots,_n)\in \ZZ[x_1,\ldots,x_n]$ such that for each $x\in R$
$$\psi_n(x)=N_n^{op}(\lambda^1(x),\ldots,\lambda^n(x))$$
\end{corollary}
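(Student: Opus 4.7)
The plan is to construct $N_n^{op}$ by composing the two ingredients already established: the Hirzebruch--Newton polynomial $N_n \in \ZZ[x_1,\ldots,x_n]$ of Definition \ref{def:AdamsOperations}, which expresses $\psi_n$ in terms of the special $\lambda$-structure $\sigma$ via Equation \eqref{eq:Psigmapsi}, and the polynomials $P_i^{op} \in \ZZ[x_1,\ldots,x_i]$ of Proposition \ref{prop:oppositePol}, which express each $\sigma^i$ in terms of $\lambda^1,\ldots,\lambda^i$. Specifically, I would set
$$N_n^{op}(x_1,\ldots,x_n) := N_n\bigl(P_1^{op}(x_1),\,P_2^{op}(x_1,x_2),\,\ldots,\,P_n^{op}(x_1,\ldots,x_n)\bigr).$$

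Verifying the claimed identity is then immediate: for any $x\in R$,
$$\psi_n(x) = N_n(\sigma^1(x),\ldots,\sigma^n(x)) = N_n\bigl(P_1^{op}(\lambda^1(x)),\ldots,P_n^{op}(\lambda^1(x),\ldots,\lambda^n(x))\bigr) = N_n^{op}(\lambda^1(x),\ldots,\lambda^n(x)).$$
Integrality of the coefficients is inherited from the composition, since both $N_n$ and each $P_i^{op}$ lie in $\ZZ[x_1,\ldots]$. Note that, in contrast with Corollary \ref{cor:psi2lambdaPols}, no torsion-free hypothesis is required here because we are composing in the integer-coefficient direction.

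The only point that requires a short verification is that the total degree of $N_n^{op}$ equals exactly $n$. I would argue as in Corollary \ref{cor:psi2lambdaPols}: by Proposition \ref{prop:oppositePol}, $\deg(P_i^{op}) = i \le n$, and $\deg(N_n) = n$ (visible from its $x_1^n$ monomial, or from the classical recursion \eqref{eq:recursionsigmapsi}), so the composition has total degree at most $n$. For the lower bound, I would track a single monomial: since $P_1^{op} = x_1$, the $x_1^n$ coefficient of $N_n$ (which is nonzero, e.g. equal to $1$) contributes a nonzero $x_1^n$ term to $N_n^{op}$ that cannot be cancelled by any other composition contribution, because every other monomial of $N_n$ involves some $x_i$ with $i \ge 2$ and therefore, after substitution by $P_i^{op}$ of degree $i\ge 2$, produces only monomials with at least one factor in $\{x_2,\ldots,x_n\}$. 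This forces $\deg(N_n^{op}) = n$.

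The hardest part, if anything, is just being careful about the degree bookkeeping; no deep $\lambda$-ring manipulation is needed, because all the combinatorial work has already been absorbed into Proposition \ref{prop:oppositePol} and the construction of the Hirzebruch--Newton polynomials. The result is essentially a functorial reformulation of the identity $\psi_n = N_n \circ (\sigma^1,\ldots,\sigma^n)$ through the $\lambda \leftrightarrow \sigma$ change of variables.
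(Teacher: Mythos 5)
Your construction $N_n^{op} = N_n(P_1^{op},\ldots,P_n^{op})$ is precisely the one the paper records (it simply declares the proof ``completely analogous'' to Corollary~\ref{cor:psi2lambdaPols} and states this composition), and your verification of the identity and the observation that integrality is automatic (no torsion-free hypothesis needed, unlike Corollary~\ref{cor:psi2lambdaPols}) are both correct.

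However, the justification for $\deg(N_n^{op}) = n$ has a gap. You claim that after substituting $P_i^{op}$ for $x_i$ with $i \ge 2$, one obtains only monomials containing some factor in $\{x_2,\ldots,x_n\}$, so that the $x_1^n$ coefficient of $N_n$ passes through to $N_n^{op}$ unmolested. This is false: the polynomials $P_i^{op}$ themselves contain pure-$x_1$ monomials. From the recursion of Proposition~\ref{prop:oppositePol} one already has $P_2^{op} = x_1^2 - x_2$, so substituting $P_2^{op}$ for $x_2$ produces an $x_1^2$ term. Concretely, $N_2^{op} = (P_1^{op})^2 - 2P_2^{op} = x_1^2 - 2(x_1^2 - x_2) = -x_1^2 + 2x_2$, so the $x_1^2$ coefficient is $-1$, not $+1$: the $-2x_2$ monomial of $N_2$ \emph{does} contribute to the $x_1^2$ coefficient of $N_2^{op}$. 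A priori such contributions could cancel the leading term, and your argument does not exclude this.

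The conclusion is nonetheless correct and can be repaired as follows. Since $N_n$ expresses the power sum $p_n$ in the elementary symmetric polynomials $e_1,\ldots,e_n$, it is isobaric of weight $n$ when $x_i$ is given weight $i$, i.e.\ every monomial $\prod_i x_i^{m_i}$ of $N_n$ satisfies $\sum_i i\, m_i = n$. On the other hand, the degree-$i$ homogeneous part of $P_i^{op}$ is exactly $x_1^i$ (clear by induction from the recursion: the only degree-raising contribution to $P_n^{op}$ is $P_{n-1}^{op}\, x_1$). Combining these two facts, the degree-$n$ part of $N_n^{op}$ is precisely the univariate polynomial $N_n(x_1, x_1^2,\ldots,x_1^n)$, and an easy induction from the Newton recursion~\eqref{eq:recursionsigmapsi} gives $N_n(x_1, x_1^2,\ldots,x_1^n) = (-1)^{n-1} x_1^n$, which is nonzero. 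Hence $\deg(N_n^{op}) = n$.
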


\begin{proof}
The proof is completely analogous that the one for Corollary \ref{cor:psi2lambdaPols}, taking
$$N_n^{op}=N_n(P_1^{op},\ldots,P_n^{op})$$
\end{proof}

\section{Abstract simplification algorithm}
\label{section:algorithm}

Let us consider the following languages:
\begin{itemize}
\item Let $\SLrings=\{+,-,\cdot,0,1\}$ be the language of (unital) rings.
\item Let $\SLlambda=\SLrings \cup \{\lambda^n,\sigma^n,\psi_n\}_{n\in \NN}$ denote the language of rings with two opposite $\lambda$-structures $\lambda$ and $\sigma$ and Adams operations $\psi$.
\item Let $\SLdrings=\SLrings \cup \{\cdot/n\}_{n\in \NN}$ be the language of divisible rings.
\item Let $\SLdlambda=\SLlambda\cup\{\cdot/n\}_{n\in \NN}$ be the language of divisible rings with a $(\lambda,\sigma,\psi)$-structure.
\end{itemize}
Moreover, let $\ST$ be the $\SLlambda$-theory of $\SLlambda$-structures $(R,+,-,\cdot,0,1,\lambda,\sigma,\psi)$ which satisfy that
\begin{itemize}
\item $(R,+,-,0,1)$ is an abelian unital ring.
\item $\lambda$ and $\sigma$ are oposite $\lambda$-ring structures on $R$
\item $\sigma$ is a special $\lambda$-ring structure.
\item $\psi$ are the Adams operations of $\sigma$
\end{itemize}
Finally, let $\ST_{tf}\supset \ST$ be the theory of $\SLlambda$-structures satisfying $\ST$ such that $(R,+)$ is torsion free.

\begin{definition}
Let $\varphi(x_1,\ldots,x_n)$ be an $\SLlambda$-term in $n$ variables. We define recursively the maximum $\lambda$-depth of the term $\varphi(x_1,\ldots,x_n)$ in the variables $(x_1,\ldots,x_n)$, which we write as $\depth(\varphi)=(d_1,\ldots,d_n)$, as follows.
\begin{enumerate}
\item $$\depth(0)=\depth(1)=(0,\ldots,0)$$
\item $$\depth(x_i)=(0,\ldots,\stackrel{i}{1},\ldots, 0)$$
\item If $\varphi(x_1,\ldots,x_n)$ and $\xi(x_1,\ldots,x_n)$ are $\SLlambda$-terms with respective maximum $\lambda$-depths
$$\depth(\varphi)=(d_1,\ldots,d_n), \quad \quad \depth(\xi)=(d_1',\ldots,d_n')$$
then
$$\depth(-\varphi)=\depth(\varphi)$$
$$\depth(\varphi+\xi)=\depth(\varphi\cdot \xi) =(\max\{d_1,d_1'\},\ldots,\max\{d_n,d_n'\})$$
\item If $\varphi(x_1,\ldots,x_n)$ is a $\SLlambda$-term with $\depth(\varphi)=(d_1,\ldots,d_n)$, then for each $k>0$
$$\depth(\lambda^k(\varphi(x_1,\ldots,x_n)))=\depth(\sigma^k(\varphi(x_1,\ldots,x_n)))=\depth(\psi_k(\varphi(x_1,\ldots,x_n)))=(kd_1,\ldots,kd_n)$$
\end{enumerate}
Analogously, define the maximum $\lambda$-depth of an $\SLdlambda$-term by the relation
$$\depth(\varphi/n)=\depth(\varphi) \quad \forall n\in \NN$$
The following notations will become useful in the next lemmas. We say that $(d_1',\ldots,d_n')\le (d_1,\ldots,d_n)$ if $d_i'\le d_i$ for all $i=1,\ldots,n$ and we say that $(d_1',\ldots,d_n')<(d_1,\ldots,d_n)$ if $(d_1',\ldots,d_n')\le (d_1,\ldots,d_n)$ and $(d_1',\ldots,d_n')\ne (d_1,\ldots,d_n)$.
Finally, given a tuple of positive integers $\overline{d}=(d_1,\ldots,d_n)$, we will write $k[x_{\overline{d}}]$ to denote the ring
$$k[x_{\overline{d}}]:=k[x_{1,1},\ldots,x_{1,d_1},x_{2,1},\ldots,x_{2,d_2},\ldots,x_{n,d_n}]$$
and we will write
$$\lambda^{\overline{d}}(x):=\lambda^{\overline{d}}(x_1,\ldots,x_n):=(\lambda^1(x_1),\ldots,\lambda^{d_1}(x_1),\lambda^1(x_2),\ldots,\lambda^{d_2}(x_2),\ldots,\lambda^{d_n}(x_n))$$
\end{definition}

The idea is that the maximum $\lambda$-depth of an expression identifyies the maximum $\lambda^k$ that has to be computer of each variable $(x_1,\ldots,x_n)$ in order to compute the expression. Here are some examples of depths of terms in $\SLlambda$
\begin{example}
For all polynomials $P(x_1,\ldots,x_n)$ in $\SLrings$ involving all variables $x_1,\ldots,x_n$
$$\depth(P(x_1,\ldots,x_n))=(1,\ldots,1)$$
\end{example}

\begin{example}
For all $k>0$
$$\depth(\lambda^k(x))=\depth(\sigma^k(x))=\depth(\psi_k(x))=k \quad \forall k$$
\end{example}

\begin{example}
$$\depth(\lambda^3(y+\sigma^2(xy)+\psi_3(y))\lambda^2(x)) = (6,9)$$
\end{example}

\begin{remark}
If $\overline{d'}\le \overline{d}$, then we have a canonical injection
$$k[x_{\overline{d'}}]\hookrightarrow k[x_{\overline{d}}]$$
\end{remark}

First of all, observe that the algebraic relations from the previous section allow us to rewrite any expression in a $\lambda$-ring $(R,\lambda)$ whose opposite $\lambda$-structure $\sigma$ is special exclusively in terms of either $\lambda$ or $\sigma$, without changing the maximum $\lambda$-depth of such expression. More explicitly, the following Lemma holds.

\begin{lemma}
Let $\varphi(x_1,\ldots,x_n)$ be a $\SLlambda$-term. Then there exists a $\SLrings\cup\{\sigma^n\}$-term $\varphi_\sigma$ and a $\SLrings\cup\{\lambda^n\}$-term $\varphi_\lambda$ with the same maximum $\lambda$-depth as $\varphi$ such that
$$\ST \models \forall x_1, \ldots, x_n\, \varphi(x_1,\ldots,x_n)=\varphi_\sigma(x_1,\ldots,x_n)$$
$$\ST \models \forall x_1, \ldots, x_n\, \varphi(x_1,\ldots,x_n)=\varphi_\lambda(x_1,\ldots,x_n)$$
\end{lemma}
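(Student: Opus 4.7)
The plan is to proceed by structural induction on the $\SLlambda$-term $\varphi$, constructing $\varphi_\sigma$ and $\varphi_\lambda$ simultaneously. I describe the construction of $\varphi_\sigma$; the case of $\varphi_\lambda$ is symmetric.

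\textbf{Base cases.} If $\varphi$ is $0$, $1$, or a variable $x_i$, set $\varphi_\sigma=\varphi$; it is already an $\SLrings$-term and its depth is unchanged. \textbf{Ring operation cases.} If $\varphi = -\varphi'$, $\varphi = \varphi' + \xi'$, or $\varphi = \varphi'\cdot \xi'$, take $\varphi_\sigma$ to be $-\varphi'_\sigma$, $\varphi'_\sigma+\xi'_\sigma$, or $\varphi'_\sigma\cdot \xi'_\sigma$ respectively; by part (3) of the depth definition, the depth is the componentwise max of the depths of the pieces, and the inductive hypothesis gives equality with $\depth(\varphi)$.

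\textbf{Operator cases.} For $\varphi = \sigma^k(\varphi')$, simply take $\varphi_\sigma = \sigma^k(\varphi'_\sigma)$; by part (4), its depth is $k\cdot\depth(\varphi'_\sigma)=k\cdot\depth(\varphi')=\depth(\varphi)$. For $\varphi = \lambda^k(\varphi')$, apply Proposition \ref{prop:oppositePol} to rewrite
$$\varphi_\sigma := P^{op}_k(\sigma^1(\varphi'_\sigma),\ldots,\sigma^k(\varphi'_\sigma)),$$
which is a $\SLrings\cup\{\sigma^n\}$-term, and which equals $\lambda^k(\varphi')$ under $\ST$ since the theory ensures $\lambda$ and $\sigma$ are opposite $\lambda$-structures. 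For $\varphi = \psi_k(\varphi')$, apply Equation \eqref{eq:Psigmapsi} and set
$$\varphi_\sigma := N_k(\sigma^1(\varphi'_\sigma),\ldots,\sigma^k(\varphi'_\sigma)),$$
which is a $\SLrings\cup\{\sigma^n\}$-term equal to $\psi_k(\varphi')$ under $\ST$ (since $\psi$ are by hypothesis the Adams operations of the special structure $\sigma$).

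\textbf{Depth check.} In both of the last two cases, $\depth(\sigma^j(\varphi'_\sigma))=j\cdot\depth(\varphi')$ for $1\le j\le k$, so combining these values through the ring operations that form the polynomial yields depth at most $k\cdot\depth(\varphi')=\depth(\varphi)$. For the reverse inequality (which gives the required equality rather than just an upper bound), note that the recursion in Proposition \ref{prop:oppositePol} shows that $P^{op}_k$ contains the monomial $(-1)^{k+1}x_k$, and the recursion \eqref{eq:recursionsigmapsi} shows that $N_k$ contains the monomial $(-1)^{k-1}k\,x_k$; hence the top-index term $\sigma^k(\varphi'_\sigma)$ syntactically appears in the substituted polynomial, so the componentwise max reaches $k\cdot\depth(\varphi')$.

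The construction of $\varphi_\lambda$ is entirely parallel, swapping the roles of $\lambda$ and $\sigma$: for $\sigma^k$ use Proposition \ref{prop:oppositePol} in the form $\sigma^k(y)=P^{op}_k(\lambda^1(y),\ldots,\lambda^k(y))$, and for $\psi_k$ use Corollary \ref{cor:lambda2psiPols}. The only mildly delicate point of the whole argument is the verification that depth is preserved exactly rather than merely bounded above; this is a purely syntactic check resting on the leading-monomial observation above for $P^{op}_k$, $N_k$, and $N^{op}_k$.
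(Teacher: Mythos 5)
Your proposal is correct and follows essentially the same inductive strategy as the paper's own proof: identical base cases, identical handling of ring operations, and the same replacement of $\lambda^k(\varphi')$ by $P^{op}_k(\sigma^1(\varphi'_\sigma),\ldots,\sigma^k(\varphi'_\sigma))$ and of $\psi_k(\varphi')$ by $N_k(\sigma^1(\varphi'_\sigma),\ldots,\sigma^k(\varphi'_\sigma))$ (with $N_k^{op}$ in the $\varphi_\lambda$ case). The one place you add something the paper leaves implicit is the leading-monomial check — that $x_k$ appears with coefficient $(-1)^{k+1}$ in $P^{op}_k$ and $(-1)^{k-1}k$ in $N_k$ — which is needed to get the depth equality exactly rather than only as an upper bound; this is a genuine, if small, tightening of the argument.
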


\begin{proof}
Let us prove it inductively in $\lambda$-terms. If $\varphi$ is a constant or a variable, then $\varphi_\sigma=\varphi_\lambda=\varphi$ stisfies the result. If $\varphi=\varphi'+\varphi''$, then
$$\varphi_\sigma=\varphi'_\sigma+\varphi''_\sigma$$
satisfies the statement, as in both cases
$$\depth(\varphi_\sigma)=\max(\depth(\varphi'_\sigma),\depth(\varphi''_\sigma))=\max(\depth(\varphi'),\depth(\varphi''))=\depth(\varphi)$$
Similarly, if $\varphi=\varphi'\cdot \varphi''$, then, analogously, $\varphi_\sigma=\varphi_\sigma'\cdot\varphi'_\sigma$. If $\varphi=-\varphi'$, then $\varphi_\sigma=-\varphi_\sigma$ holds.
If $\varphi=\sigma^k(\varphi')$ for some $k$, then $\varphi_\sigma=\sigma^k(\varphi'_\sigma)$ clearly satisfies the result, as
$$\depth(\varphi_\sigma)=k\depth(\varphi'_\sigma)=k\depth(\varphi')=\depth(\varphi)$$
If $\varphi=\lambda^k(\varphi')$ for some $k$, then take
$$\varphi_\sigma=P_N^{op}(\sigma^1(\varphi'_\sigma),\ldots,\sigma^k(\varphi'_\sigma))$$
Taking into account the induction hypotyesis on $\varphi'$, by Proposition \ref{prop:oppositePol}, $\ST \models \forall x_1,\ldots,x_n\,  \varphi_\sigma(x_1,\ldots,x_n)=\varphi(x_1,\ldots,x_n)$ and we have
$$\depth(\varphi_\sigma)=\max(\depth(\sigma^1(\varphi'_\sigma),\ldots,\depth(\sigma^k(\varphi'_\sigma))))=\max(\depth(\varphi'),\ldots,k\depth(\varphi'))=k\depth(\varphi')=\depth(\varphi)$$
Analogously, if $\varphi=\psi_k(\varphi')$ for some $k$, then take
$$\varphi_\sigma=N_k(\sigma^1(\varphi'_\sigma),\ldots,\sigma^k(\varphi'_\sigma))$$
The previous computation of the depth of $\varphi_\sigma$ also works in this case and, by construction of $\psi_k$ (Definition \ref{def:AdamsOperations}), $\ST \models \forall x_1,\ldots,x_n\,  \varphi_\sigma(x_1,\ldots,x_n)=\varphi(x_1,\ldots,x_n)$.
The result for $\varphi_\lambda$ is completely analogous, setting $N_k^{op}$ from Corollary \ref{cor:lambda2psiPols} instead of $N_k$ from Definition \ref{def:AdamsOperations}.
\end{proof}

\begin{theorem}
\label{thm:exprSimplification}
Let $\varphi(x_1,\ldots,x_n)$ be a $\SLlambda$-term with $\lambda$-depth $\depth(\varphi)=(d_1,\ldots,d_n)$. Then there exist unique polynomials
$$P_\varphi^\lambda(x_{1,1},\ldots,x_{1,d_1},x_{2,1},\ldots,x_{n,d_n})\in \ZZ[x_{1,1},\ldots,x_{n,d_n}]=\ZZ[x_{\depth(\varphi)}]$$
$$P_\varphi^\sigma(x_{1,1},\ldots,x_{1,d_1},x_{2,1},\ldots,x_{n,d_n})\in \ZZ[x_{1,1},\ldots,x_{n,d_n}]=\ZZ[x_{\depth(\varphi)}]$$
such that
$$\ST \models \forall x_1,\ldots,x_n \, \varphi(x_1,\ldots,x_n)=P_\varphi^\lambda(\lambda^1(x_1),\dots,\lambda^{d_1}(x_1),\lambda^1(x_2),\ldots, \lambda^{d_n}(x_n))$$
$$\ST \models \forall x_1,\ldots,x_n \, \varphi(x_1,\ldots,x_n)=P_\varphi^\sigma(\sigma^1(x_1),\dots,\sigma^{d_1}(x_1),\sigma^1(x_2),\ldots, \sigma^{d_n}(x_n))$$
\end{theorem}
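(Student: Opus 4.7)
The plan is to establish existence of $P^\sigma_\varphi$ by structural induction on $\varphi$ using the axioms for a special $\lambda$-ring structure $\sigma$, and then to deduce existence of $P^\lambda_\varphi$ via the change-of-basis polynomials $P^{op}_k$ of Proposition \ref{prop:oppositePol}. By the preceding lemma, one may first replace $\varphi$ by an equivalent pure $\SLrings \cup \{\sigma^n\}$-term $\varphi_\sigma$ of the same $\lambda$-depth, so it suffices to work with terms built from the ring operations and $\sigma^k$ only. The base cases $\varphi \in \{0, 1, x_i\}$ are immediate (using $\sigma^1(x_i) = x_i$), and for $\varphi = \varphi' + \varphi''$, $\varphi = \varphi' \cdot \varphi''$, $\varphi = -\varphi'$ one simply sets $P^\sigma_\varphi$ to be the corresponding sum, product or negation of the polynomials obtained by induction, with depth behaving correctly as componentwise maxima.

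The only nontrivial inductive step is $\varphi = \sigma^k(\varphi')$, where by induction $\varphi'$ equals $P^\sigma_{\varphi'}(\sigma^{\overline{d'}}(x))$ for some integer polynomial $P^\sigma_{\varphi'}$, with $\overline{d'} = \depth(\varphi')$. I must rewrite $\sigma^k$ of this expression as an integer polynomial in the $\sigma^j(x_i)$'s with $1 \le j \le k d_i'$, matching the required depth $(kd_1',\ldots,kd_n')$. The plan is to apply iteratively four reduction rules to the monomial decomposition of $P^\sigma_{\varphi'}$: the addition formula $\sigma^k(A+B) = \sum_{i+j=k} \sigma^i(A)\sigma^j(B)$ to split sums (and to absorb positive integer coefficients); the product formula $\sigma^k(AB) = P_k(\sigma^\bullet(A), \sigma^\bullet(B))$ from specialness of $\sigma$ to handle products; the composition formula $\sigma^k(\sigma^m(x_i)) = P_{k,m}(\sigma^1(x_i), \ldots, \sigma^{km}(x_i))$ for nested $\sigma$'s on variables; and the negation formula $\sigma^k(-A) = (-1)^k P^{op}_k(\sigma^1(A), \ldots, \sigma^k(A))$ obtained from equation \eqref{eq:sigma-x} and Proposition \ref{prop:oppositePol} to absorb signs. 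Each rule generates $\sigma^j$ only of subexpressions with $j \le k$, and composition on the variables yields orders bounded by $k \cdot d_i'$, so the iteration terminates in an integer polynomial in the prescribed variables. The main bookkeeping obstacle is to confirm that the depth bounds combine correctly across sums, products and nested $\sigma$'s so that no $\sigma^{j}(x_i)$ with $j > kd_i'$ ever appears, rather than accumulating across the layers of recursion.

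For uniqueness, it is enough to exhibit a model of $\ST$ in which the values $\{\sigma^j(x_i) : 1 \le i \le n,\ 1 \le j \le d_i\}$ are algebraically independent over $\ZZ$. The free special $\lambda$-ring on $n$ generators, which can be realized as the tensor power $\Lambda^{\otimes n}$ of the ring of symmetric functions with $\sigma^k$ of the degree-one generator of each factor being the complete symmetric function $h_k^{(i)}$, provides such a model; its opposite $\lambda$-structure sends the generator to the elementary symmetric function $e_k^{(i)}$ and is also special via the $\omega$ involution on each factor, so $\Lambda^{\otimes n} \in \ST$, and the $h_k^{(i)}$ are algebraically independent over $\ZZ$. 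Any integer polynomial relation satisfied by the $\sigma^j(x_i)$ in every model of $\ST$ must therefore be trivial. Finally, $P^\lambda_\varphi$ is obtained by composition,
$$P^\lambda_\varphi(x_{1,1},\ldots,x_{n,d_n}) := P^\sigma_\varphi\bigl(P^{op}_1(x_{1,1}),\, P^{op}_2(x_{1,1},x_{1,2}),\,\ldots,\, P^{op}_{d_n}(x_{n,1},\ldots,x_{n,d_n})\bigr),$$
which has integer coefficients (since each $P^{op}_k$ does) and satisfies the required identity because $P^{op}_k(\lambda^1(x_i),\ldots,\lambda^k(x_i)) = \sigma^k(x_i)$; uniqueness follows by the analogous algebraic independence of $\lambda^j(x_i) = e_j^{(i)}$ in $\Lambda^{\otimes n}$.
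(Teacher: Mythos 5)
Your proposal is correct and follows essentially the same approach as the paper: structural induction on terms using the $\lambda$-ring axioms and specialness of $\sigma$ (including the negation identity $\sigma^k(-x)=(-1)^k\lambda^k(x)$ and the opposite polynomials $P^{op}_k$ to pass from $P^\sigma_\varphi$ to $P^\lambda_\varphi$), with uniqueness deduced from algebraic independence of the $\sigma^j(x_i)$ in the free special $\lambda$-ring. The one place the paper is tidier concerns exactly the bookkeeping you flag: rather than iterating four rewrite rules on the monomial decomposition, it first proves a separate sub-lemma for $\sigma^k$ applied to pure $\SLrings$-terms, then in the main inductive step treats the $\sigma^j(x_i)$ as fresh variables $z_{i,j}$, applies the sub-lemma to get a polynomial in $\sigma^a(z_{i,j})$ with $a\le k$, and only then substitutes $\sigma^a(\sigma^j(x_i))=P_{a,j}(\sigma^1(x_i),\ldots,\sigma^{aj}(x_i))$ — making it immediate that the composition polynomial is applied exactly once per atom, so the index never exceeds $kd_i'$ and no accumulation occurs.
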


\begin{proof}
We will address first existence the polynomials. The existence of $P_\varphi^\lambda$ clearly follows from the existence of $P_\varphi^\sigma$, as Proposition \ref{prop:oppositePol} implies that
$$\ST\models \forall x_1,\ldots,x_n \, P_\varphi^\sigma(\sigma^1(x_1),\dots,\sigma^{d_1}(x_1),\sigma^1(x_2),\ldots, \sigma^{d_n}(x_n)) = P_\varphi^\sigma(P_1^{op}(\lambda^1(x_1)),\ldots,P_{d_1}^{op}(\lambda^1(x_1),\ldots,\lambda^n(x_1)),\ldots,P_{d_n}^{op}(\lambda^1(x_n),\ldots,\lambda^{d_n}(x_n))$$
so we can take
$$P_\varphi^\lambda(x_{1,1},\ldots,x_{n,d_n})=P_\varphi^\sigma(P_1^{op}(x_1),\ldots,P_{d_1}^{op}(x_{1,1},\ldots,x_{1,d_1}),\ldots, P_{d_n}^{op}(x_{n,1}\ldots,x_{n,d_n}))$$

Let us focus on the existence of $P_\varphi^\sigma$. By the previous lemma we can assume without loss of generality that $\lambda^k$ and $\psi_k$ do not appear in $\varphi$. Let us start by proving that if $\varphi$ has the form $\varphi=\sigma^k(\varphi')$ where $k\ge 0$ and $\varphi'$ is a $\SLrings$-term, then $\varphi$ satisfies the Theorem. Let us prove it inductively on terms of $\varphi'$.

If $\varphi'=0$ or $\varphi'=1$, then $P=0$ or $P=1$ satisfy the Theorem respectively. If $\varphi'=x_j$ for some $j$, then
$$\depth(\sigma^k(\varphi'))=(0,\ldots,\stackrel{j}{k},\ldots, 0)$$
and $P_{\sigma^k(x_j)}^\sigma=x_{j,k}$, considered as a polynomial in $\ZZ[x_{1,1},\ldots,x_{j,1},\ldots,x_{j,k},\ldots, x_{n,1}]$ clearly satisfies the result for each $k$.

If $\varphi'=\varphi'_1+\varphi'_2$, then
$$\ST\models \forall x_1,\ldots,x_n \, \sigma^k(\varphi'_1+\varphi'_2)= \sum_{j=0}^k \sigma^j(\varphi'_1)\sigma^{k-j}(\varphi'_2)$$
By induction hypothesis, then for each $j$ and $l=1,2$
$$\ST\models \forall x_1,\ldots,x_n  \, \sigma^j(\varphi'_l(x_1,\ldots,x_n))=P_{\sigma^j(\varphi'_l)}^\sigma(\sigma^{j\depth(\varphi'_l)}(x))$$
where $P_{\sigma^j(\varphi'_l)}^\sigma \in \ZZ[x_{k\depth(\varphi'_l)}]$. Thus
$$\ST\models \forall x_1,\ldots,x_n \, \sigma^k(\varphi'_1(x_1,\ldots,x_n)+\varphi'_2(x_1,\ldots,x_n))= \sum_{j=0}^k P_{\sigma^j(\varphi'_1)}^\sigma(\sigma^{j\depth(\varphi'_1)}(x))P_{\sigma^{k-j}(\varphi'_2)}^\sigma(\sigma^{(k-j)\depth(\varphi'_2)}(x))$$
and a straightforward computation shows that last polynomial lives in
$$\ZZ[x_{k\depth(\varphi_1'+\varphi_2')}]=\ZZ[x_{\depth(\varphi)}]$$
In an analogous way, if $\varphi'=\varphi_1' \cdot \varphi_2'$, then
\begin{multline*}
\ST\models \forall x_1,\ldots,x_n \, \sigma^k(\varphi'_1(x_1,\ldots,x_n)\cdot \varphi'_2(x_1,\ldots,x_n))\\
= P_k(\sigma^1(\varphi'_1(x_1,\ldots,x_n)),\ldots,\sigma^k(\varphi'_1(x_1,\ldots,x_n)),\sigma^1(\varphi'_2(x_1,\ldots,x_n)),\ldots, \sigma^k(\varphi'_2(x_1,\ldots,x_n)))\\
 = P_k(P_{\sigma^1(\varphi'_1)}^\sigma(\sigma^{\depth(\varphi'_1)}(x)),\ldots,P_{\sigma^k(\varphi'_1)}^\sigma(\sigma^{k\depth(\varphi'_1)}(x)),P_{\sigma^1(\varphi'_2)}^\sigma(\sigma^{\depth(\varphi'_2)}(x)),\ldots,P_{\sigma^k(\varphi'_2)}^\sigma(\sigma^{k\depth(\varphi'_2)}(x)))
\end{multline*}
composing the last polynomials $P_k(P_{\sigma^1(\varphi'_1)}^\sigma,\ldots,P_{\sigma^k(\varphi'_2)}^\sigma)$ we obtain the desired polynomial, which clearly lives in $\ZZ[x_{\max\{k\depth(\varphi'_1),k\depth(\varphi'_2)\}}]=\ZZ[x_{\depth(\varphi)}]$.
Finally, if $\varphi=\sigma^k(-\varphi')$, then we can use the identity \eqref{eq:sigma-x} and Proposition \ref{prop:oppositePol} to obtain that
\begin{multline*}
\ST\models \forall x_1,\ldots,x_n \, \sigma^k(-\varphi'(x_1,\ldots,x_n))= (-1)^k \lambda^k(\varphi'(x_1,\ldots,x_n))\\
=(-1)^kP_k^{op}(\sigma^1(\varphi'(x_1,\ldots,x_n)),\ldots,\sigma^k(\varphi'(x_1,\ldots,x_n)))
\end{multline*}
And we can then repeat the previous argument on $\sigma^j(\varphi')$ for each $j=1,\ldots,n$, obtaining a polynomial
$$P_k^{op}(P_{\sigma^1(\varphi')}^\sigma(\lambda^{\depth(\varphi')}(x)),\ldots,P_{\sigma^k(\varphi')}^\sigma(\lambda^{\depth(\varphi')}(x)))\in \ZZ[x_{k\depth(\varphi')}]=\ZZ[x_{\depth(\sigma^k(-\varphi'))}]$$

Now, we are ready to prove the main theorem inductively on terms of $\SLlambda$-expressions. We have already seen that the result holds if $\varphi$ is a constant or a variable. Suppose that the result holds for two expressions $\varphi_1$ and $\varphi_2$. Then clearly
$$P_{\varphi_1+\varphi_2}^\sigma=P_{\varphi_1}^\sigma+P_{\varphi_2}^\sigma\in \ZZ[x_{\max\{\depth(\varphi_1),\depth(\varphi_2)\}}]=\ZZ[x_{\depth(\varphi_1+\varphi_2)}]$$
$$P_{\varphi_1\cdot\varphi_2}^\sigma=P_{\varphi_1}^\sigma\cdot P_{\varphi_2}^\sigma\in \ZZ[x_{\max\{\depth(\varphi_1),\depth(\varphi_2)\}}]=\ZZ[x_{\depth(\varphi_1\cdot\varphi_2)}]$$
$$P_{-\varphi}^\sigma=-P_\varphi^\sigma\in \ZZ[x_{\depth(\varphi)}]= \ZZ[x_{\depth(-\varphi)}]$$
and, for each $k$, we have
$$\ST \models \forall x_1,\ldots,x_n \, \sigma^k(\varphi_1(x_1,\ldots,x_n))=  \sigma^k(P_{\varphi_1}^\sigma(\sigma^{\depth(\varphi_1)}(x)))$$
so, applying the previous argument, we know that there exists $P_{\sigma^k(P_{\varphi_1}(x_{\depth(\varphi_1)}))}^\sigma$ such that
$$\ST \models \forall x_1,\ldots,x_n\, \sigma^k(P_{\varphi_1}^\sigma(\sigma^{\depth(\varphi_1)}(x))) = P_{\sigma^k(P_{\varphi_1}(x_{\depth(\varphi_1)}))}^\sigma(\sigma^1(\sigma^1(x_1)),\ldots,\sigma^k(\sigma^1(x_1)),\ldots,\sigma^k(\sigma^{d_n}(x_n)))$$
where $(d_1,\ldots,d_n)=\depth(\varphi_1)$. Then, we can simply use the fact that $\sigma$ is special to get that for each $a,b,c$
$$\sigma^a(\sigma^b(x_c))=P_{a,b}(\sigma^1(x_c),\ldots,\sigma^{ab}(x_c))$$
To obtain that
$$P_{\sigma^k(\varphi_1)}^\sigma=P_{\sigma^k(P_{\varphi_1}(x_{\depth(\varphi_1)}))}^\sigma(P_{1,1},P_{2,1},\ldots,P_{k,1},P_{1,2},\ldots,P_{k,2},\ldots,P_{k,d_k})\in \ZZ[x_{k\depth(\varphi_1)}]$$

Finally, let us prove uniqueness of the polynomials. Suppose that given $\varphi(x_1,\ldots,x_n)$ of depth $(d_1,\ldots,d_n)$ there exist $P_\varphi^\sigma$ and $Q_\varphi^\sigma$ with the given properties. In particular, if we apply the result to the free special $\lambda$-ring in variables $x_1,\ldots,x_n$ we have that 
$$P_\varphi^\sigma(\sigma^1(x_1),\ldots,\sigma^{d_n}(x_n))-Q_\varphi^\sigma(\sigma^1(x_1),\ldots,\sigma^{d_n}(x_n))=0$$
but in the free special $\lambda$-ring generated by such variables, the elements $\sigma^j(x_i)$ are all algebraically independent, so $P_\varphi^\sigma-Q_\varphi^\sigma=0$. The proof for uniqueness of $P_\varphi^\lambda$ is completely analogous.
\end{proof}

Then, we propose the following algorithm for computing the polynomial $P_\varphi^\lambda$

\begin{algorithm}[H]
\caption{Simplification algorithm for $\SLlambda$-expressions}
\label{alg:simpAlg}
\begin{algorithmic}[1]
\Procedure{$\lambda$-simp}{$\varphi$,$x_1$,\ldots,$x_n$}
\State Compute $(d_1,\ldots,d_n)=\depth(\varphi)$
\For{each $x_i$}
	\For{each $k\le d_i$}
		\State Compute recursively $\Psi_{i,k}=P_{\psi_k(x_i)}^\lambda=N_k^{op}(x_{i,1},\ldots,x_{i,k})$
	\EndFor
	\For{each $a,b$, $ab\le d_i$}
		\State Compute $\mu_{a,b}(x_i)=P_{\psi_a(\lambda^b(x_i))}^\lambda=L_b^{op}(\Psi_{i,a+1},\Psi_{i,a+2},\ldots,\Psi_{i,a+b})$
	\EndFor
\EndFor
\State \textbf{Scan} $\varphi$ \textbf{and substitute}
$$\psi_k(\omega) \rightarrow \text{\texttt{ApplyPsi}}(\omega,k,\{\mu_{a,b}(x_i)\})$$
$$\lambda^k(\omega) \rightarrow \left \{ \begin{array}{ll}
L_k^{op}(\text{\texttt{ApplyPsi}}(\omega,1,\{\mu_{a,b}(x_i)\}),\ldots,\text{\texttt{ApplyPsi}}(\omega,k,\{\mu_{a,b}(x_i)\})) & \text{if }\omega \ne x_1,\ldots,x_n\\
x_{i,k} & \text{if }\omega=x_i
\end{array}\right.$$
$$\sigma^k(\omega) \rightarrow L_k(\text{\texttt{ApplyPsi}}(\omega,1,\{\mu_{a,b}(x_i)\}),\ldots,\text{\texttt{ApplyPsi}}(\omega,k,\{\mu_{a,b}(x_i)\})$$
\textbf{to get} $\hat{\varphi}$.
\Comment Polynomials $L_k$, $L_k^{op}$, $N_k^{op}$ are pre-computed using recurrences from Propositions \ref{prop:oppositePol}, \ref{prop:psi2sigmaPols} and formulas from Corollaries \ref{cor:psi2lambdaPols} and \ref{cor:lambda2psiPols}.
\State \textbf{Execute} $\hat{\varphi}$ and \textbf{expand the resulting rational polynomial}
\EndProcedure
\Function{ApplyPsi}{$\varphi$,$k$,$\{\mu_{a,b}(x_i)\}$}
\For{each $x_i$}
	\For{each $j$}
		\State Substitute $x_{i,j}$ by $\mu_{k,j}(x_i)$ in $\varphi$
	\EndFor
\EndFor
\EndFunction
\end{algorithmic}
\end{algorithm}

\begin{theorem}
\label{thm:mainAlgorithm}
For each $\SLlambda$-expression  $\varphi$ in a torsion free $\lambda$-ring $R$, Algorithm \ref{alg:simpAlg} computes the polynomial $P_\varphi^\lambda$ from Theorem \ref{thm:exprSimplification}.
\end{theorem}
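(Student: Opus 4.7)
The plan is structural induction on the $\SLlambda$-term $\varphi$ showing that the scan-and-substitute process of Algorithm \ref{alg:simpAlg}, applied innermost-first, produces the polynomial $P_\varphi^\lambda$ of Theorem \ref{thm:exprSimplification}. Correctness rests on two pillars: the ring-homomorphism property of $\psi_k$ (Proposition \ref{prop:psiHomo}), and the universal polynomial identities among $\lambda^n$, $\sigma^n$, $\psi_n$ supplied by $L_n$, $L_n^{op}$, $N_n$, $N_n^{op}$ and $P_n^{op}$ (Proposition \ref{prop:oppositePol}, Proposition \ref{prop:psi2sigmaPols}, Corollaries \ref{cor:psi2lambdaPols} and \ref{cor:lambda2psiPols}).

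First I would verify the precomputed quantities. The identity $\Psi_{i,k}=P_{\psi_k(x_i)}^\lambda$ is Corollary \ref{cor:lambda2psiPols} applied to $x_i$. For $\mu_{a,b}(x_i)=P_{\psi_a(\lambda^b(x_i))}^\lambda$, I would use Corollary \ref{cor:psi2lambdaPols} to write $\lambda^b(x_i)=L_b^{op}(\psi_1(x_i),\ldots,\psi_b(x_i))$, apply $\psi_a$ through Proposition \ref{prop:psiHomo}, and use the composition rule $\psi_a\circ\psi_j=\psi_{aj}$ to reduce to the already-computed $\Psi_{i,aj}$'s. I would then verify \texttt{ApplyPsi}: if an intermediate expression has been rewritten as $Q(\lambda^{\overline d}(x))$ for some $Q\in\QQ[x_{\overline d}]$, then because $\psi_k$ is a ring homomorphism,
\begin{equation*}
\psi_k(Q(\lambda^{\overline d}(x)))=Q(\psi_k(\lambda^1(x_1)),\ldots,\psi_k(\lambda^{d_n}(x_n)))=Q(\mu_{k,1}(x_1),\ldots,\mu_{k,d_n}(x_n)),
\end{equation*}
which is exactly the componentwise substitution performed by \texttt{ApplyPsi}.

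The main induction proceeds by structural recursion on $\varphi$. Constants and variables are immediate, and arithmetic nodes $+,-,\cdot$ are handled by taking sums, differences or products of the already-simplified sub-polynomials. The three operator cases match the substitution rules of the algorithm: $\lambda^k(\omega)=L_k^{op}(\psi_1(\omega),\ldots,\psi_k(\omega))$ by Corollary \ref{cor:psi2lambdaPols}, $\sigma^k(\omega)=L_k(\psi_1(\omega),\ldots,\psi_k(\omega))$ by Proposition \ref{prop:psi2sigmaPols}, and $\psi_k(\omega)$ directly via \texttt{ApplyPsi}, with the special leaf case $\lambda^k(x_i)\mapsto x_{i,k}$ being the defining convention of the generators. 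Under innermost-first scanning, $\omega$ has already been rewritten as a polynomial in the $x_{i,j}$'s when each operator node is processed, so \texttt{ApplyPsi} can be invoked to compute each required $\psi_j(\omega)$.

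The main obstacle is torsion-freeness: $L_n$ and $L_n^{op}$ carry rational coefficients coming from the $1/n$ in Proposition \ref{prop:psi2sigmaPols}, so intermediate computations a priori live in $\QQ[x_{\overline d}]$ rather than $\ZZ[x_{\overline d}]$. To close the argument I would observe that the induction produces a rational polynomial $\hat P\in\QQ[x_{\overline d}]$ satisfying $\varphi(x_1,\ldots,x_n)=\hat P(\lambda^{\overline d}(x))$ in every $\SLlambda$-structure in $\ST_{tf}$ (the symbolic $1/n$ divisions being unambiguous by torsion-freeness, as in Proposition \ref{prop:psi2sigmaPols}). Evaluating this identity in the free special $\lambda$-ring on $x_1,\ldots,x_n$, where the $\lambda^j(x_i)$ are algebraically independent via the triangular change of variables $P_n^{op}$ from Proposition \ref{prop:oppositePol}, the uniqueness part of Theorem \ref{thm:exprSimplification} forces $\hat P=P_\varphi^\lambda\in\ZZ[x_{\overline d}]$, completing the verification.
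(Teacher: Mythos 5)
Your proof follows essentially the same route as the paper: verify the precomputed $\Psi_{i,k}$ and $\mu_{a,b}$, establish that \texttt{ApplyPsi} implements $\psi_k$ via the ring-homomorphism property, do the structural induction with innermost-first processing, and close the rationality gap via the uniqueness part of Theorem \ref{thm:exprSimplification}. One point worth flagging: you correctly invoke $\psi_a\circ\psi_j=\psi_{aj}$ from Proposition \ref{prop:psiHomo}, so your $\mu_{a,b}$ is built from $\Psi_{i,a},\Psi_{i,2a},\ldots,\Psi_{i,ab}$. The paper's Algorithm \ref{alg:simpAlg} and its proof instead write $L_b^{op}(\Psi_{i,a+1},\ldots,\Psi_{i,a+b})$ and $\psi_a(\psi_j(x))=\psi_{a+j}(x)$, which contradicts item (3) of Proposition \ref{prop:psiHomo}; this looks like a typo in the paper that your version silently corrects. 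Your torsion-freeness closing (evaluate in the free special $\lambda$-ring, invoke uniqueness) is equivalent to the paper's (clear denominators by $N$, invoke uniqueness of $P^\lambda_{N\omega'}$, divide back); either is fine.
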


\begin{proof}
First of all, observe that, due to Corollary \ref{cor:psi2lambdaPols}
$$\ST_{tf} \models \forall x \, \psi_a(\lambda^b(x))=\psi_a(L_b^{op}(\psi_1(x),\ldots,\psi^b(x))) = L_b^{op}(\psi_a(\psi_1(x)),\ldots,\psi_a(\psi_b(x)))=L_b^{op}(\psi_{a+1}(x),\ldots,\psi_{a+b}(x))$$
Therefore, it is clear that $\mu_{a,b}(x_i)$ contain $P_{\psi_a(\lambda^b(x_i))}^\lambda$, as asserted.

Given any $\SLlambda$-expression $\varphi$ with $\depth(\varphi)=(d_1,\ldots,d_n)$, by Theorem \ref{thm:exprSimplification} we have that for each integer $k$
$$\ST \models \forall x_1,\ldots, x_n \, \psi_k(\varphi(x_1,\ldots,x_n))=\psi_k(P_\varphi^\lambda(\lambda^1(x_1),\ldots, \lambda^{d_n}(x_n)))$$
On the other hand, as $\psi_k$ are homomorphisms, we have
$$\ST \models \forall x_1,\ldots, x_n \, \psi_k(P_\varphi^\lambda(\lambda^1(x_1),\ldots, \lambda^{d_n}(x_n))) = P_\varphi^\lambda(\psi_k(\lambda^1(x_1)),\ldots, \psi_k(\lambda^{d_n}(x_n)))$$
Thus, applying $\psi_k$ to any $\SLlambda$-expression $\varphi$ is equivalent to substituting $\lambda^j(x_i)$ (a.k.a., variable $x_{i,j}$ by the polynomial $\mu_{n,j}(x_i)$ in the associated polynomial $P_\varphi^\lambda$.

On the other hand, Proposition \ref{prop:psi2sigmaPols} and Corollary \ref{cor:psi2lambdaPols}, we obtain that
$$\ST_{tf}\models \forall x_1,\ldots, x_n \, \lambda^k(\varphi(x_1,\ldots,x_n)) = L_n^{op}(\psi_1(\varphi(x_1,\ldots,x_n)),\ldots,\psi_k(\varphi(x_1,\ldots,x_n)))$$
$$\ST_{tf}\models \forall x_1,\ldots, x_n \,  \sigma^k(\varphi(x_1,\ldots,x_n)) = L_n(\psi_1(\varphi(x_1,\ldots,x_n)),\ldots,\psi_k(\varphi(x_1,\ldots,x_n)))$$

Thus, if $\omega=P_\varphi^\lambda(\lambda^{\depth(\varphi)}(x))$ for some $\varphi$ then applying the function \texttt{ApplyPsi}($\omega$,$k$,$\{\mu_{a,b}(x_i)\}$) yields an integer polynomial $Q_{\psi^k(\omega)}\in \ZZ[x_{k\depth(\omega)}]=\ZZ[x_{k\depth(\varphi)}]$ such that
$$\ST \models \forall x_1,\ldots, x_n \, Q_{\psi^k(\omega)}(\lambda^{k\depth(\varphi)}(x))=\psi^k(P_\varphi(\lambda^{\depth(\varphi)}(x)))=\psi^k(\varphi(x_1,\ldots,x_n))=P_{\psi^k(\varphi)}^\lambda(\lambda^{k\depth(\varphi)}(x))$$
By uniqueness of the polynomial $P_{\psi^k(\varphi)}^\lambda$ of Theorem \ref{thm:exprSimplification}, we obtain that $Q_{\psi^k(\omega)}=P_{\psi_k(\varphi)}^\lambda$.

Similarly, under the same assumptions on $\omega$, if now $\omega'$ is either $\psi_k(\omega)$, $\lambda^k(\omega)$ or $\sigma^k(\omega)$, then applying the substitutions and executing \texttt{ApplyPsi} in the main loop of the algorithm to such $\omega$ yields a rational polynomial $Q_{\omega'}$ on variables $x_{i,j}$ such that
$$\ST_{tf} \models \forall x_1,\ldots, x_n \, Q_{\omega'}(\lambda^{k\depth(\varphi)}(x)) = \omega'(x_1,\ldots,x_n) = P_{\omega'}^\lambda(\lambda^{k\depth(\varphi)}(x))$$
Multiplying by a big enough natural number $N$, we would get that $NQ_{\omega'}$ is an integral polynomial such that
$$\ST_{tf} \models \forall x_1,\ldots, x_n \, NQ_{\omega'}(\lambda^{k\depth(\varphi)}(x)) = NP_{\omega'}^\lambda(\lambda^{k\depth(\varphi)}(x))=N\omega'(x_1,\ldots,x_n)=P_{N\omega'}^\lambda(\lambda^{k\depth(\varphi)}(x))$$
By uniqueness of $P_{N\omega'}^\lambda$ of Theorem \ref{thm:exprSimplification}, we obtain that $NQ_{\omega'}=P_{N\omega'}^\lambda=NP_{\omega'}^\lambda$. Assuming that the rings are torsion free, this implies that $Q_{\omega'}=P_{\omega'}^\lambda$.

On the other hand, by construction, each expression of the form $\lambda^j(x_i)$ is transformed at the main loop into a variable $x_{i,j}$ which is exactly $P_{\lambda^j(x_i)}^\lambda$. As the substitutions of the main loop are executed from the inner-most parts of the formula $\varphi$, the previous argument then shows that after each set of substitutions is made, the resulting formula is an integral polynomial which coincides with the simplified polynomial predicted by Theorem \ref{thm:exprSimplification} for the corresponding expression. Therefore, when the algorithm ends, the result is $P_\varphi^\lambda$.
\end{proof}

\section{Simplification of expressions in the Grothendieck ring of Chow motives}
\label{section:KChow}

Let $\KK$ be a field of characteristic $0$ and let $K_0(\ChowK)$ denote the Grothendieck ring of motives over $k$ with rational coefficients. By \cite{H07}, $K_0(\ChowK)$ admits two natural opposite $\lambda$-structures, induced by the symmetric product of varieties. Given a Chow Motive $M$, take
$$\lambda^n([M]) = [\Sym^n(M)]$$
More precisely, define the symmetric product $\Sym^n(M)$ as the image of $\frac{1}{n!} \sum_{\alpha\in S_n} \alpha : X^{\otimes n}\to X^{\otimes n}$. Similarly, take
$$\sigma^n([M])=[\Alt^n(M)]$$
where $\Alt^n$ is the alternating product, defined as the image of $\frac{1}{n!} \sum_{\alpha \in S_n} (-1)^{\op{sg}{\alpha}} \alpha: X^{\otimes n} \to X^{\otimes n}$.
Heinloth \cite{H07} proves that $\lambda^n$ and $\sigma^n$ are $\lambda$-structures $K_0(\ChowK)$, but only $\sigma^n$ is a special $\lambda$-structure.

Let $\LL=[\mathbb{A}^1]\in K_0(\ChowK)$ be the Lefschetz object. In addition to $K_0(\ChowK)$, we will also consider its localization $K_0(\ChowK)[\LL^{-1}]$ and its dimensional completion
$$ \hat{K}_0 (\ChowK)= \left\{ \sum_{r\geq 0} [Y_r] \, \LL^{-r} \, \middle | \, [Y_r]\in K_0(\ChowK) \text{ with }\dim Y_r -r \longrightarrow -\infty \right\}.$$
as, for some applications, we will need elements of the form $\LL^n$ or $\LL^n-1$ to be invertible. Both $\lambda$-structures then extend to the localization and its completion, as $\LL$ is $1$-dimensional for the opposite structure $\sigma$ (and, as such, $\psi_n(\LL)=\LL^n$ for all $n$). 

Given a motive $M$, the formal sum
$$\sum_{n\ge 0} [\Sym^n M] t^n = \lambda_t([M])$$
is usually called the motivic zeta function of $M$ and it is classically denoted by $Z_M(t)$. The motives and zeta functions of some varieties then admit some interesting decompositions and closed expressions when expressed in terms of the $\lambda$-ring structure $\lambda^n=\Sym^n$. For instance, if $X$ is a projective curve of genus $g\ge 2$, then by \cite{Kapranov00} its motive splits as $[X]=1+[h^1(X)]+\LL$ and, moreover, the zeta function of $[h^1(X)]$ ia a polynomial of degree $2g$, usually called $P_X(t)$. Then
$$Z_X(t)=\lambda_t([X]) =\lambda_t(1+[h^1(X)]+\LL)=\lambda_t(1)\lambda_t([h^1(X)])\lambda_t(\LL)=\frac{1}{1-t} P_X(t) \frac{1}{1-\LL t}=\frac{P_X(t)}{(1-t)(1-\LL t)}$$
The motive $[h_1(X)]$ and its zeta function/polynomial appear in many additional computations of motives of geometrical structures associated to the curve $X$. For instance the motive of the Jacobian of $X$ is
$$[\Jac(X)]=\sum_{i=0}^{2g}\lambda^i([h^1(X)])=P_X(1)$$
On the other hand, the elements $\lambda^i([h^1(X)])$ have certain relations among them, which can be deduced from the following functional equation due to Kapranov \cite{Kapranov00} (see also \cite{H07}).
$$Z_X\left(\frac{1}{\LL t}\right) = \LL^{1-g} t^{2-2g} Z_X(t)$$
In particular, the following properties hold
\begin{enumerate}
\item For all $i>2g$, $\lambda^i([h^1(X)])=0$.
\item For all $g<i\le 2g$,
$$\lambda^i([h^1(X)]) = \LL^{i-g} \lambda^{2g-i}([h^1(X)])$$
\end{enumerate}
Let us write
$$a_i(X)=\lambda^i([h^1(X)]) \quad \quad i=1,\ldots,g$$
if $X$ is clear from the context, we will omit it and simply write $a_i=a_i(X)$.Then
$$P_X(t)=1+a_1(X)t+a_2(X)t^2+\ldots+a_g(X)t^g + \LL a_{g-1} t^{g+1}+\ldots +\LL^{g-1} a_1 t^{2g-1}+ \LL^g t^{2g}$$
It is therefore straightforward to see that any algebraic expression in the $\lambda$-terms $\lambda^k([X])$ can be expanded as a polynomial in $a_1,a_2=\lambda^2(a_1),\ldots, a_g=\lambda^g(a_1)$ and $\LL$. There is, nevertheless, a key difference in working with expressions involving $\lambda$-operations on $[X]$ and on $a_1(X)$. The dimension of $[X]$ in $(\hat{K}_0(\ChowK),\lambda)$ is infinite, whereas the dimension of $a_1(X)$ is finite ($2g$). Thus, expressions involving arbitrary large numbers of $\lambda$ operations on $X$ are always equivalent to polynomials in the same fixed number of variables $a_1,\ldots,a_g$ and $\LL$. Using Theorem \ref{thm:exprSimplification}, we can extend this intuitive argument to further arbitrary expressions spanned by curves in $\hat{K}_0(\ChowK)$.

\begin{theorem}
\label{thm:exprSimpMotives}
Let $X_1,\ldots,X_n$ be smooth projective algebraic curves over a field $\KK$ of characteristic $0$ of respective genra $g_1,\ldots,g_n$. Let $\varphi(\LL,[X_1],\ldots,[X_n])$ be any $\SLlambda$-term in the sub-$(\lambda,\sigma,\psi)$-ring of $\hat{K}_0(\ChowK)$ spanned by the curves and the Lefschetz object. Then there exists an integral polynomial $P_\varphi\in \ZZ[L,a_{1,1},\ldots,a_{1,g_1},\ldots,a_{n,g_n}]$ such that
$$\varphi(\LL,[X_1],\ldots,[X_n])=P_\varphi(\LL,a_1(X_1),\ldots,a_{g_1}(X_1),\ldots,a_{g_n}(X_n))$$
\end{theorem}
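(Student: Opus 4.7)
The plan is a two-step reduction. First, by Heinloth \cite{H07} the opposite $\lambda$-structure $\sigma$ on $\hat{K}_0(\ChowK)$ is special, so Theorem \ref{thm:exprSimplification} applies to $\varphi$ viewed as an $\SLlambda$-term in the $n+1$ variables $\LL, [X_1], \ldots, [X_n]$ and produces an integral polynomial $P_\varphi^\lambda$ expressing $\varphi(\LL,[X_1],\ldots,[X_n])$ as a $\ZZ$-polynomial in the values $\lambda^k(\LL)$ for $k \leq d_0$ and $\lambda^k([X_i])$ for $k \leq d_i$, where $(d_0, d_1, \ldots, d_n) = \depth(\varphi)$.

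Second, each of these $\lambda^k$-evaluations will be rewritten as an integer polynomial in $\LL$ and the $a_{i,j}$. Since $\LL$ is $1$-dimensional for the special structure $\sigma$, one has $\sigma_t(\LL) = 1 + \LL t$, so \eqref{eq:oppositeLambda} gives $\lambda_t(\LL) = (1-\LL t)^{-1}$ and hence $\lambda^k(\LL) = \LL^k \in \ZZ[\LL]$. For each curve $X_i$, Kapranov's decomposition $[X_i] = 1 + [h^1(X_i)] + \LL$ together with the homomorphism property $\lambda_t(x+y) = \lambda_t(x)\lambda_t(y)$ yields
$$\lambda_t([X_i]) = \frac{P_{X_i}(t)}{(1-t)(1-\LL t)}.$$
Expanding this rational function as a formal power series in $t$ produces an explicit integer polynomial formula for every $\lambda^k([X_i])$ in the variables $\LL$ and $a_1(X_i), \ldots, a_{2g_i}(X_i)$, since the coefficients of $1/((1-t)(1-\LL t))$ are the integer polynomials $\sum_{j=0}^{m}\LL^j$. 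Finally, Kapranov's functional equation provides the integer substitutions $a_{g_i+\ell}(X_i) = \LL^\ell a_{g_i-\ell}(X_i)$ for $1 \leq \ell \leq g_i$ together with $a_j(X_i) = 0$ for $j > 2g_i$, reducing the result to $\ZZ[\LL, a_{i,1}, \ldots, a_{i,g_i}]$.

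Composing the integer polynomial $P_\varphi^\lambda$ with these integer-coefficient substitutions for each of its variables produces the desired $P_\varphi \in \ZZ[L, a_{1,1}, \ldots, a_{n,g_n}]$ satisfying the stated identity. There is no serious obstacle: the heavy lifting is entirely absorbed into Theorem \ref{thm:exprSimplification}, and the remaining task is a clean bookkeeping exercise that tracks integrality through three ingredients which are each manifestly integral, namely the polynomial $P_\varphi^\lambda$, the series expansion of $((1-t)(1-\LL t))^{-1}$, and the Kapranov relations. The only step that merits care in a full write-up is spelling out the closed-form expansion of $\lambda^k([X_i])$ as a polynomial in $\LL$ and $a_1(X_i), \ldots, a_{g_i}(X_i)$, which follows by a standard partial-fraction/series manipulation.
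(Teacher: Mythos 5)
Your proof is correct and reaches the same conclusion as the paper's, but by a slightly different ordering of the same ingredients. The paper \emph{pre-substitutes} Kapranov's decomposition $[X_i]=1+[h^1(X_i)]+\LL$ into $\varphi$ to obtain an $\SLlambda$-term $\varphi'$ in the variables $\LL$ and $a_1(X_i)=[h^1(X_i)]$, applies Theorem \ref{thm:exprSimplification} to $\varphi'$, and then eliminates the $\lambda^k(a_1(X_i))$ directly via $\lambda^k(\LL)=\LL^k$, $\lambda^k(a_1(X_i))=a_k(X_i)$ for $k\le g_i$, the functional-equation relations $\lambda^k(a_1(X_i))=\LL^{k-g_i}a_{2g_i-k}(X_i)$ for $g_i<k\le 2g_i$, and vanishing for $k>2g_i$. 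You instead apply Theorem \ref{thm:exprSimplification} first, with $[X_i]$ as the generators, and then expand each $\lambda^k([X_i])$ as the $t^k$-coefficient of $Z_{X_i}(t)=P_{X_i}(t)/((1-t)(1-\LL t))$ before invoking the functional equation. Both routes are valid and of the same length; the paper's ordering has the small advantage of matching Algorithm \ref{alg:motiveSimpAlg} exactly (which performs the $[X_i]\mapsto 1+a_1(X_i)+\LL$ rewriting up front and then substitutes for $\lambda^k(a_1(X_i))$ directly), while yours keeps the decomposition of $[X_i]$ encapsulated in the zeta function and requires the extra, if routine, observation that the coefficients of $1/((1-t)(1-\LL t))$ are the integer polynomials $\sum_{j=0}^m\LL^j$. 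Note also that, by the uniqueness clause of Theorem \ref{thm:exprSimplification}, the two constructions must produce the same polynomial $P_\varphi$.
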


\begin{proof}
Expressing $[X_i]=1+a_1(X)+\LL$, we can rewrite $\varphi$ as some expression $\varphi'$ in $a_1(X_1),\ldots,a_1(X_n)$
$$\varphi(\LL,[X_1],\ldots,[X_n])=\varphi(\LL,1+a_1(X_1)+\LL,\ldots,1+a_1(X_n)+\LL)=\varphi'(\LL,a_1(X_1),\ldots,a_1(X_n))$$
Let $(d_0,d_1,\ldots,d_n)=\depth(\varphi')$. Then Theorem \ref{thm:exprSimplification} implies that there exists a polynomial
$$P_{\varphi'}^\lambda\in \ZZ[x_{(d_0,\ldots,d_n)}]$$
such that
\begin{equation}
\label{eq:exprSimpMotives1}
\varphi'(\LL,a_1(X_1),\ldots,a_1(X_n))=P_{\varphi'}^\lambda(\lambda^1(\LL),\ldots,\lambda^{d_0}(\LL),\lambda^1(a_1(X)),\ldots, \lambda^{d_n}(a_1(X_n)))
\end{equation}
Nevertheless, we know the following relations.
\begin{itemize}
\item $\lambda^k(\LL)=\LL^k \quad \forall k$
\item $\lambda^k(a_1(x_i))=a_k \quad \forall 1\le k\le g_i$
\item $\lambda^k(a_1(X_i))=\LL^{k-g} a_{2g-k}(X_i) \quad \forall g_i<k\le 2g_i$
\item $\lambda^k(a_1(X_i))=0 \quad \forall k>2g_i$
\end{itemize}
Thus, all the generators in the right hand side of the equality \eqref{eq:exprSimpMotives1} are always algebraically generated by $\LL$ and $a_k(X_i)$ for $1\le k\le g_i$. Substituting the corresponding algebraic relations at the right hand side of \eqref{eq:exprSimpMotives1}, we obtain the desired polynomial $P_\varphi$.
\end{proof}

Finally, with some little adaptations, Algorithm \ref{alg:simpAlg} can be used to compute the polynomial $P_\varphi$ from the previous Theorem.

\begin{algorithm}[H]
\caption{Simplification algorithm for motivic expressions}
\label{alg:motiveSimpAlg}
\begin{algorithmic}[1]
\Procedure{motive-simp}{$\varphi$,$X_1$,\ldots,$X_n$}
\State Rewrite $\varphi(\LL,[X_1],\ldots,[X_n])=\varphi'(\LL,a_1(X_1),\ldots,a_1(X_n))$
\State Compute $(d_0,d_1,\ldots,d_n)=\depth(\varphi')$
\For{each curve $X_i$}
	\For{each $k\le d_i$}
		\State Compute recursively $\Psi_{i,k}=P_{\psi_k(a_1(X_i))}^\lambda=N_k^{op}(a_{i,1},\ldots,a_{i,g_i},La_{i,g_i-1},\ldots, L^{g_i-1}a_{i,1}, L^{g_i},0,
\ldots,0)$
		\State $\mu_{\alpha,1}(X_i)=\Psi_{i,\alpha}$
	\EndFor
	\For{each $\alpha,\beta$, $\alpha\beta \le d_i$, $2\le \beta\le g_i$}
		\State Compute $\mu_{\alpha,\beta}(X_i)=P_{\psi_\alpha(a_\beta(X_i)))}^\lambda=L_\beta^{op}(\Psi_{i,\alpha+1},\Psi_{i,\alpha+2},\ldots,\Psi_{i,\alpha+\beta})$
	\EndFor
\EndFor
\State \textbf{Scan} $\varphi'$ \textbf{and substitute}
$$\psi_k(\omega) \rightarrow \text{\texttt{ApplyPsi}}(\omega,k,\{\mu_{\alpha,\beta}(X_i)\})$$
$$\lambda^k(\omega) \rightarrow \left \{ \begin{array}{ll}
L_k^{op}(\text{\texttt{ApplyPsi}}(\omega,1,\{\mu_{\alpha,\beta}(X_i)\}),\ldots,\text{\texttt{ApplyPsi}}(\omega,k,\{\mu_{\alpha,\beta}(X_i)\})) & \text{if }\omega \ne a_1(X_1),\ldots,a_1(X_n)\\
a_{i,k} & \text{if }\omega=a_1(X_i), \quad k\le g\\
L^{k-g_i} a_{i,2g_i-k} & \text{if }\omega=a_1(X_i), \quad g_i<k< 2g_i\\
L^{g_i} & \text{if }\omega=a_1(X_i), \quad k=2g_i\\
0 & \text{if }\omega=a_1(X_i), \quad k>2g_i
\end{array}\right.$$
$$\sigma^k(\omega) \rightarrow L_k(\text{\texttt{ApplyPsi}}(\omega,1,\{\mu_{\alpha,\beta}(X_i)\}),\ldots,\text{\texttt{ApplyPsi}}(\omega,k,\{\mu_{\alpha,\beta}(X_i)\})$$
\textbf{to get} $\hat{\varphi}$.
\Comment Polynomials $L_k$, $L_k^{op}$, $N_k^{op}$ are pre-computed using recurrences from Propositions \ref{prop:oppositePol}, \ref{prop:psi2sigmaPols} and formulas from Corollaries \ref{cor:psi2lambdaPols} and \ref{cor:lambda2psiPols}.
\State \textbf{Execute} $\hat{\varphi}$ and \textbf{expand the resulting rational polynomial}
\EndProcedure
\Function{ApplyPsi}{$\varphi$,$k$,$\{\mu_{\alpha,\beta}(x_i)\}$}
\State Substitute $L$ by $L^k$ in $\varphi$
\For{each curve $X_i$}
	\For{$j=1,\ldots,g_i$}
		\State Substitute $a_{i,j}$ by $\mu_{k,j}(X_i)$ in $\varphi$
	\EndFor
\EndFor
\EndFunction
\end{algorithmic}
\end{algorithm}

\begin{theorem}
\label{thm:mainAlgirhtmMotives}
For each $\SLlambda$-term $\varphi(\LL,[X_1],\ldots,[X_n])$ spanned by motves of curves $X_1,\ldots,X_n$ and the Lefschetz object $\LL$, Algorithm \ref{alg:motiveSimpAlg} computes the polynomial $P_\varphi$ from Theorem \ref{thm:exprSimpMotives}.
\end{theorem}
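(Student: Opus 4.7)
The plan is to mirror the proof of Theorem~\ref{thm:mainAlgorithm}, replacing the abstract pre-computation of $\Psi_{i,k}$ and $\mu_{a,b}$ by their curve-specific counterparts and checking that the substitutions made in the main loop remain coherent. First, I would justify the initial rewriting $\varphi(\LL,[X_1],\ldots,[X_n])=\varphi'(\LL,a_1(X_1),\ldots,a_1(X_n))$ using $[X_i]=1+a_1(X_i)+\LL$, which by Theorem~\ref{thm:exprSimpMotives} converts the statement into a simplification in the sub-$\lambda$-ring generated by $\LL$ and the $a_1(X_i)$.

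The second step is to verify that the pre-computed data encodes $\psi_k$ correctly on the generators. Since $\LL$ is $1$-dimensional for $\sigma$, one has $\psi_k(\LL)=\LL^k$, which is why \texttt{ApplyPsi} substitutes $L\mapsto L^k$. For the $a_1(X_i)$, Corollary~\ref{cor:lambda2psiPols} expresses $\psi_k(a_1(X_i))$ via $N_k^{op}$ evaluated at $\lambda^1(a_1(X_i)),\ldots,\lambda^k(a_1(X_i))$; substituting the Kapranov relations $\lambda^j(a_1(X_i))=a_{i,j}$ for $j\le g_i$, $\lambda^j(a_1(X_i))=\LL^{j-g_i}a_{i,2g_i-j}$ for $g_i<j<2g_i$, $\lambda^{2g_i}(a_1(X_i))=\LL^{g_i}$, and $\lambda^j(a_1(X_i))=0$ for $j>2g_i$ yields exactly the formula for $\Psi_{i,k}$ written in line 6 of the algorithm. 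The quantities $\mu_{\alpha,\beta}(X_i)$ with $\beta\ge 2$ follow from the same identity $\psi_\alpha(\lambda^\beta(x))=L_\beta^{op}(\psi_{\alpha+1}(x),\ldots,\psi_{\alpha+\beta}(x))$ derived in the proof of Theorem~\ref{thm:mainAlgorithm}, where we now take $x=a_1(X_i)$ and replace $\psi_j(a_1(X_i))$ by the already-computed $\Psi_{i,j}$.

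Third, I would proceed by structural induction on subterms $\omega$ of $\varphi'$, exactly as in Theorem~\ref{thm:mainAlgorithm}. The inductive hypothesis says that after processing $\omega$, the algorithm has produced an integral polynomial in $\LL$ and the $a_{i,j}$ ($1\le j\le g_i$) coinciding with the value of $\omega$ in $\hat{K}_0(\ChowK)$. Since $\psi_k$ is a $\lambda$-ring homomorphism (Proposition~\ref{prop:psiHomo}(1)), pushing $\psi_k$ through such a polynomial reduces to applying $\psi_k$ to each generator, which is what \texttt{ApplyPsi} does via the substitutions $L\mapsto L^k$ and $a_{i,j}\mapsto \mu_{k,j}(X_i)$. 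For the substitution rules on $\lambda^k(\omega)$ and $\sigma^k(\omega)$ at non-generator $\omega$, Proposition~\ref{prop:psi2sigmaPols} and Corollary~\ref{cor:psi2lambdaPols} guarantee that the resulting rational polynomials become integral after the pre-computed curve substitutions are applied; the uniqueness argument of Theorem~\ref{thm:exprSimpMotives} forces them to agree with $P_{\lambda^k(\omega)}$ and $P_{\sigma^k(\omega)}$ respectively, once one invokes torsion freeness of $\hat{K}_0(\ChowK)$ (so the common denominator $N$ introduced via $L_k, L_k^{op}$ can be canceled, as at the end of the proof of Theorem~\ref{thm:mainAlgorithm}).

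The main obstacle I anticipate is the control of denominators combined with the fact that, unlike in the free special $\lambda$-ring, the generators $a_{i,1},\ldots,a_{i,g_i},\LL$ are \emph{not} algebraically independent — the curve-specific relations force $\lambda^k(a_1(X_i))$ with $k>g_i$ to collapse onto lower-index generators. Hence the uniqueness part of Theorem~\ref{thm:exprSimpMotives} must be used carefully: one argues that whatever integral polynomial the algorithm outputs must represent the same element of $\hat{K}_0(\ChowK)$ as $P_\varphi$, and then torsion-freeness lets us eliminate the rational denominators that arise in intermediate steps involving $L_k$ and $L_k^{op}$. Since every intermediate substitution respects the evaluation map into $\hat{K}_0(\ChowK)$, the algorithm terminates with a polynomial agreeing with $P_\varphi$ on the image of the generators, completing the proof.
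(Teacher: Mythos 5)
Your proof takes the same route as the paper's (very terse) argument: reduce to Theorem~\ref{thm:mainAlgorithm} by treating $a_1(X_i)$ as the free generators of $\varphi'$ and then substitute the Kapranov relations $\lambda^k(a_1(X_i))=\LL^{k-g_i}a_{2g_i-k}(X_i)$ (for $g_i<k\le 2g_i$) and $\lambda^k(a_1(X_i))=0$ (for $k>2g_i$); the paper says exactly this in one sentence, while you have unpacked the induction and the handling of denominators. One small slip: you appeal to ``the uniqueness argument of Theorem~\ref{thm:exprSimpMotives},'' but that theorem states only existence of $P_\varphi$ (and indeed cannot state uniqueness, since for a fixed curve $X$ the classes $\LL,a_1(X),\ldots,a_g(X)$ need not be algebraically independent in $\hat K_0(\ChowK)$); the uniqueness you actually use is the one from Theorem~\ref{thm:exprSimplification} inside the free special $\lambda$-ring, applied \emph{before} specializing to a given curve, and your final paragraph already repairs this correctly by arguing at the level of evaluation into $\hat K_0(\ChowK)$.
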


\begin{proof}
It follows directly from Theorem \ref{thm:mainAlgorithm}, using $a_{i,1}=a_1(X_i)$ as generators $x_i$ of $\varphi'$, once we take into account that variables $a_{i,k}$ with $k>g_i$ are not algebraically free anymore, but rather given in terms of $a_{i,1},\ldots,a_{i,g_i}$ and $\LL$ as
$$\left\{ \begin{array}{ll}
L^{k-g_i} a_{i,2g_i-k} & \text{if }g_i<k< 2g_i\\
L^{g_i} & \text{if }k=2g_i\\
0 & \text{if }k>2g_i
\end{array}\right.$$
\end{proof}

\section{Computational verification of a conjectural formula for the motives of moduli spaces of twisted Higgs bundles and Lie algebroid connections}
\label{section:Higgs}

Let $X$ be a smooth complex projective curve of genus $g\ge 2$. Let $L$ be line bundle on $X$ of degree $d_L=\deg(\SL)$ and suppose that $d_L=2g-2+p$ with $p>0$. An $L$-twisted Higgs bundle on $X$ is a vector bundle $E$ together with an homomorphism
$$\varphi:E\longrightarrow E\otimes L$$
called Higgs field. An $L$-twisted Higgs bundle $(E,\varphi)$ is called semistable if for all subbundles $F\subseteq E$ such that $\varphi(F)\subseteq F\otimes L$ we have
$$\frac{\deg(F)}{\rk(F)} \le \frac{\deg(E)}{\rk(E)}$$
Let $\SM_L(r,d)$ denote the moduli space of semistable $L$-twisted Higgs bundles of rank $r$ and degree $d$ on $X$ (c.f. \cite{AO21}).

As an application of the previous methodology, we will compute explicit simplified formulas for the motives of some moduli spaces of rank 2 and rank 3 $L$-twisted Higgs bundles on $X$ as integer polynomials in the motives $a_1,\ldots,a_g$ and the Lefschetz motive $\LL$, where
$$a_i = \lambda^i(h^1(X))$$
and we will use this to verify computationally that some conjectural formulas from Mozgovoy \cite{Moz12}
about the motive of the moduli space of $L$-twisted Higgs bundles hold, at least for low rank, genus and degree.

More explicitly, we will apply the proposed algorithm to the following formulas from \cite[Corollary 7.7]{AO21} obtained through the Bialynicki-Birula decomposition of the variety.

\begin{theorem}[ {\cite[Corollary 7.7]{AO21}}]
\label{thm:geometricMotive}

\begin{enumerate}
\item For $r=1$
$$[\SM_{L}(X,1,d)]=[\Jac(X)\times H^0(X,L^\vee)]=\LL^{d_L+1-g}P_X(1)$$
\item For $r=2$, if $(2,d)=1$,
\begin{equation*}
\begin{split}
[\SM_{L}(X,2,d)] &= \frac{\LL^{4d_L+4-4g}\Big(P_X(1)P_X(\LL)-\LL^gP_X(1)^2\Big)}{(\LL-1)(\LL^2-1)}\\
& \ \ \ + \LL^{3d_L+2-2g}P_X(1)\sum_{d_1=1}^{\lfloor\frac{1+d_L}{2}\rfloor}\lambda^{1-2d_1+d_L}([X]).
\end{split}
\end{equation*}
\item For $r=3$, if $(3,d)=1$,
\begin{multline*}
[\SM_{L}(X,3,d)] = \frac{\LL^{9d_L+9-9g}P_X(1)}{(\LL-1)(\LL^2-1)^2(\LL^3-1)}\Big(\LL^{3g-1}(1+\LL+\LL^2)P_X(1)^2\\
-\LL^{2g-1}(1+\LL)^2P_X(1)P_X(\LL)+P_X(\LL)P_X(\LL^2)\Big)\\
+\frac{\LL^{7d_L+5-5g}P_X(1)^2}{\LL-1}\sum_{d_1=1}^{\lfloor\frac{1}{3}+\frac{d_L}{2}\rfloor}\bigg(\LL^{d_1+g}\lambda^{-2d_1+d_L}([X]+\LL^2) -\lambda^{-2d_1+d_L}([X]\LL+1)\bigg)\\
 +\frac{\LL^{7d_L+5-5g}P_X(1)^2}{\LL-1}\sum_{d_1=1}^{\lfloor\frac{2}{3}+\frac{d_L}{2}\rfloor}\bigg(\LL^{d_1+g-1}\lambda^{-2d_1+d_L+1}([X]+\LL^2) -\lambda^{-2d_1+d_L+1}([X]\LL+1)\bigg)\\
+\LL^{6d_L+3-3g}P_X(1)\sum_{d_1=1}^{d_L} \sum_{d_2= \max\{-d_L+d_1, 1 -d_1\}}^{\lfloor (1+d_L-a)/2\rfloor}\lambda^{-d_1+d_2+d_L}([X])\lambda^{1-d_1-2d_2+d_L}([X])
\end{multline*}
\end{enumerate}
\end{theorem}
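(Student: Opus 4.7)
The plan is to exploit the natural $\mathbb{G}_m$-action on $\SM_L(X,r,d)$ given by rescaling the Higgs field, $(E,\varphi)\mapsto (E,t\varphi)$, and to compute the motive via a motivic Bialynicki--Birula decomposition. Since $\SM_L(X,r,d)$ is smooth and quasi-projective when $(r,d)=1$, and since the action admits limits as $t\to 0$ (even though the moduli is non-proper), the motive should split as
$$[\SM_L(X,r,d)]=\sum_{F}[F]\cdot\LL^{d^+(F)},$$
where $F$ runs over the connected components of the $\mathbb{G}_m$-fixed locus and $d^+(F)$ is the dimension of the positive-weight part of the tangent bundle at a point of $F$.

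First I would identify the $\mathbb{G}_m$-fixed locus. A stable $L$-twisted Higgs bundle is fixed precisely when it admits a grading $E=\bigoplus_i E_i$ such that $\varphi(E_i)\subseteq E_{i+1}\otimes L$ --- these are the Hitchin-style chains, classified by the discrete type data $(r_i,d_i)$. For $r=1$ the action is trivial on $E$ and contracts the Higgs field $H^0(X,L)$ linearly to zero, so the fixed locus is $\Jac(X)$ and the whole moduli retracts onto it; combined with $\dim H^0(X,L)=d_L+1-g$ (via Riemann--Roch and the vanishing $H^1(X,L)=0$ since $d_L>2g-2$), this yields the rank one formula. For $r=2$ one has two chain types: the $\varphi=0$ stratum, equal to the classical moduli of rank two semistable bundles whose motive is known from the Atiyah--Bott / Harder--Narasimhan recursion, and chains of type $(1,1)$ parametrized by a line bundle $L_1$ of degree $d_1$ together with a nonzero morphism $L_1\to L_2\otimes L$ taken up to scalar; as $L_1$ varies these organise into projective bundles over $\Jac(X)$ whose motives involve $\lambda^{1-2d_1+d_L}([X])$, which is exactly the summation appearing in the stated formula. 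For $r=3$ the same analysis must be run over chain types $(3),(2,1),(1,2),(1,1,1)$ under the appropriate semistability inequalities.

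Next, for each fixed stratum I would compute its motive and the attracting dimension $d^+(F)$ by analyzing the weight decomposition of the hypercohomology deformation complex $[\End E\to\End E\otimes L]$; this produces the explicit powers of $\LL$ (the $\LL^{4d_L+4-4g}$, $\LL^{3d_L+2-2g}$, $\LL^{9d_L+9-9g}$, $\LL^{7d_L+5-5g}$, $\LL^{6d_L+3-3g}$ factors) appearing in the formulas. The main obstacle lies in the rank three case: the chains of type $(1,1,1)$ produce the double sum on $(d_1,d_2)$ with stability bounds $\max\{-d_L+d_1,\,1-d_1\}\le d_2\le\lfloor(1+d_L-a)/2\rfloor$ and a product of two section-space motives $\lambda^{-d_1+d_2+d_L}([X])\lambda^{1-d_1-2d_2+d_L}([X])$, while the type $(3)$ stratum contributes an elaborate rational expression in $P_X(1),P_X(\LL),P_X(\LL^2)$ coming from the motive of the classical moduli $\SM(X,3,d)$. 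Assembling all contributions into the stated closed form, reconciling the various chain strata and the $\varphi=0$ stratum into a single coherent expression, requires careful manipulation of motivic zeta-function identities, for which Kapranov's functional equation $Z_X(1/(\LL t))=\LL^{1-g}t^{2-2g}Z_X(t)$ is the essential tool.
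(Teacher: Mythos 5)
The paper does not itself prove this statement: it is quoted directly from \cite[Corollary 7.7]{AO21}, and Section~\ref{section:Higgs} explicitly remarks that those formulas were ``obtained through the Bialynicki--Birula decomposition of the variety.'' Your sketch---the $\mathbb{G}_m$-action scaling the Higgs field, Hitchin-chain fixed components, motive splitting over BB strata, and attracting dimensions read off the weight decomposition of the deformation complex $[\End E\to\End E\otimes L]$---is precisely the route taken in \cite{AO21}, so it matches the cited source's approach.
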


and the following conjectural formula from \cite[Corollary 3]{Moz12} (in the rewritten form presented in \cite[Conjecture 7.12]{AO21}) obtained as a solution for the motivic ADHM recursion formula.

\begin{conjecture}{ {\cite[Conjecture 3]{Moz12}, \cite[Theorem 1.1 and Theorem 4.6]{MG19}, \cite[Conjecture 7.12]{AO21} }}
\label{conj:Mozgovoy}
For each integer $n\ge 1$, let
$$\SH_n(t)=\sum_{\lambda\in \SP_n} \prod_{s\in d(\lambda)} (-t^{a(s)-l(s)} \LL^{a(s)})^p t^{(1-g)(2l(s)+1)}Z_X(t^{h(s)}\LL^{a(s)}).$$
Define $H_r(t)$ for $r\ge 1$ as follows
\begin{equation*}
\begin{split}
\sum_{r\ge 1} H_r(t) T^r &= (1-t)(1-\LL t) \op{PLog} \Bigg (\sum_{n\ge 0} \SH_n(t) T^n \Bigg) \\
&=(1-t)(1-\LL t) \sum_{j\ge 1} \frac{\mu(j)}{j} \psi_j \bigg [\log \bigg ( 1+\sum_{n\ge 1}\SH_n(t) T^n \bigg) \bigg]\\
&=(1-t)(1-\LL t) \sum_{j\ge 1} \sum_{k\ge 1} \frac{(-1)^{k+1}\mu(j)}{jk}  \Bigg ( \sum_{n\ge 1} \psi_j[\SH_n(t)] T^{jn} \Bigg)^k.
\end{split}
\end{equation*}
Where $\SP_n$ denotes the set of partitions of $n$, considered as decreasing sequences of integers $\lambda=(\lambda_1\ge \lambda_2 \ge \cdots \ge \lambda_k>0)$ with $\sum_{i=1}^k \lambda_i=n$, and for each $\lambda\in \SP_n$,
$$d(\lambda)=\{(i,j)\in \ZZ^2 | 1\le i, \, 1\le j\le \lambda_i\},$$
$$a(i,j)=\lambda_i-j, \quad \quad l(i,j)=\lambda_j'-i=\max\{l|\lambda_l\ge j\}-i, \quad \quad h(i,j)=a(i,j)+l(i,j)+1,$$
and the operator $\op{PLog}$ in the formula is the Plethystic logarithm for the Adams operations associated to the opposite structure $\sigma$, defined as
$$\op{PLog}(A)=\sum_{j\ge 1} \frac{\mu(j)}{j} \psi_k[\log(A)].$$
Then $H_r(t)$ is a polynomial in $t$ and
$$[\SM_{\Lambda_\SL}(r,d)] = (-1)^{pr} \LL^{r^2(g-1)+p\frac{r(r+1)}{2}} H_r(1).$$
\end{conjecture}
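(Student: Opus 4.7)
The plan has two parts: (a) show that $H_r(t)$ is a polynomial in $t$, and (b) establish the identity $[\SM_{\Lambda_\SL}(r,d)] = (-1)^{pr}\LL^{r^2(g-1)+pr(r+1)/2}\, H_r(1)$.

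For (a), each $\SH_n(t)$ is a product of factors $Z_X(t^{h(s)}\LL^{a(s)})$, which are rational functions in $t$ whose poles come from the denominators $(1-t^{h(s)}\LL^{a(s)})(1-t^{h(s)}\LL^{a(s)+1})$ of the motivic zeta function. The prefactor $(1-t)(1-\LL t)$ is designed to cancel such poles after taking $\op{PLog}$. The strategy is to compute the $t$-orders of poles and zeros that contribute to the coefficient of $T^r$ in the plethystic expansion and bound them uniformly in the summation index, showing that for each fixed $r$ only finitely many $t$-degrees survive. This would adapt the strategy used by Hausel--Rodriguez-Villegas and by Mozgovoy--Schiffmann in adjacent contexts where a similar plethystic presentation is known to produce polynomials.

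For (b), I would express both sides as integer polynomials in $\LL$ and the motivic generators $a_i(X) = \lambda^i(h^1(X))$, $i=1,\ldots,g$. The conjectural right-hand side is an $\SLlambda$-term by construction, since the logarithm, plethystic logarithm and Adams operations are written explicitly in the $\lambda$-ring language, so Theorem \ref{thm:exprSimpMotives} and Algorithm \ref{alg:motiveSimpAlg} together produce a canonical integer polynomial representative in $\ZZ[\LL, a_1(X),\ldots,a_g(X)]$. The geometric left-hand side $[\SM_L(X,r,d)]$ can be accessed via the explicit formula of Theorem \ref{thm:geometricMotive} coming from the Bialynicki--Birula decomposition; applying the same algorithm to that formula produces its own canonical polynomial representative. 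The identity then reduces to equality of two polynomials over $\ZZ$ in the same finite set of generators.

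The main obstacle is handling the rank $r$, twisting degree $p$, genus $g$ and degree $d$ symbolically rather than as fixed integers. Algorithm \ref{alg:motiveSimpAlg} normalizes each instance separately: for any fixed $(r,g,p,d)$ the two canonical polynomials can be produced and compared, but to conclude the conjecture universally one must work at the level of generating series. A natural route is to show that both $\sum_r [\SM_L(X,r,d)]\, T^r$ and the corresponding formal series on the right satisfy the same motivic wall-crossing recursion in the style of Mozgovoy--Schiffmann or Davison, determined by initial data at $r=1$ where Theorem \ref{thm:geometricMotive}(1) gives an unconditional formula. Reducing both sides to the same recursion and uniqueness statement in $\hat{K}_0(\ChowK)[[T]]$ is the deep geometric step that would elevate the computational verification produced by the algorithm to a complete proof; without it, the most one obtains directly from the simplification machinery developed here is a finite but effectively extensible table of verified cases.
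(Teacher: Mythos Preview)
The statement you are addressing is a \emph{conjecture} in the paper, not a theorem with a proof. The paper does not prove Conjecture~\ref{conj:Mozgovoy} at all; it is stated as an open problem from \cite{Moz12}, and the only result the paper establishes about it is the computational verification in Theorem~\ref{thm:verification}: for each fixed tuple $(g,r,p,d)$ in the indicated finite ranges ($2\le g\le 11$, $1\le r\le 3$, $\gcd(r,d)=1$, $1\le p\le 20$), Algorithm~\ref{alg:motiveSimpAlg} is applied separately to the formula of Theorem~\ref{thm:geometricMotive} and to the formula of Conjecture~\ref{conj:Mozgovoy}, and the two resulting integer polynomials in $\ZZ[L,a_1,\ldots,a_g]$ are checked to coincide. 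That is the entirety of the paper's argument concerning this statement.

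Your final paragraph is therefore the accurate summary: the machinery of Theorems~\ref{thm:exprSimplification} and~\ref{thm:exprSimpMotives} yields exactly a finite table of verified cases, and this is precisely what the paper claims and does. The earlier parts of your proposal, namely the sketch for part~(a) on polynomiality of $H_r(t)$ and the suggestion in part~(b) of matching both sides via a common wall-crossing recursion with uniqueness in $\hat{K}_0(\ChowK)[[T]]$, are not present in the paper and are not proofs as written: they are outlines of a possible strategy with the key geometric and analytic steps (pole cancellation, existence and uniqueness of the recursion for the geometric side) left entirely open. In short, your proposal does not contain a gap relative to the paper because the paper contains no proof for you to fall short of; but neither does it supply one, and you correctly flag this yourself.
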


Algorithm \ref{alg:motiveSimpAlg} has been applied to the previous formulas from Theorem \ref{thm:geometricMotive} and Conjecture \ref{conj:Mozgovoy} to obtain the following simplified motivic polynomials
$$P_{g,r,p}^{\op{BB}}, P_{g,r,p}^{\op{ADHM}} \in \ZZ[L,a_1,\ldots,a_g]$$
representing the respective formulas for each genus $g\ge 2$, rank $r\ge 2$ and $p>0$ in the sense of Theorem \ref{thm:exprSimplification}. As these polynomials are integral polynomials in a small set of variables, they could be compared computationally in a direct way, and the following was verified.

\begin{theorem}
\label{thm:verification}
Let $X$ be a smooth complex projective curve of genus $g$. Then
$$P_{g,r,p}^{\op{BB}} = P_{g,r,p}^{\op{ADHM}}$$
if the following conditions hold
\begin{itemize}
\item $2\le g\le 11$,
\item $1\le r\le 3$,
\item $d$ is coprime with $r$ and 
\item $1\le p\le 20$ (a.k.a. $2g-1\le -d_\SL \le 2g+18$).
\end{itemize}
In particular, as \cite[Corollary 7.7]{AO21} together with Theorems \ref{thm:exprSimplification} and \ref{thm:mainAlgorithm} prove that
$$[\SM_{L}(r,d)]=P_{g,r,p}^{\op{BB}}(\LL,h^1(X),\ldots,\lambda^g(h^1(X)))$$
then this shows that Mozgovoy's Conjecture \ref{conj:Mozgovoy} holds if such conditions are satisfied.
\end{theorem}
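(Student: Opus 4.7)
The proof will be entirely computational in nature, relying on the machinery developed in the previous sections to reduce equality of two a priori very different motivic expressions to equality of two integer polynomials in a small, finite set of variables. The plan is the following. For each quadruple $(g,r,p)$ in the prescribed range (and any choice of $d$ coprime to $r$, noting that one expects and should check that the resulting polynomial does not depend on $d$ within its class modulo $r$), I would take the right-hand sides of the formulas of Theorem \ref{thm:geometricMotive} and of Conjecture \ref{conj:Mozgovoy} as $\SLlambda$-terms in the sub-$(\lambda,\sigma,\psi)$-ring of $\hat{K}_0(\ChowK)$ generated by $\LL$ and $[X]$, and feed each of them into Algorithm \ref{alg:motiveSimpAlg}. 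By Theorem \ref{thm:mainAlgirhtmMotives}, the outputs are the canonical integral polynomials $P_{g,r,p}^{\op{BB}}$ and $P_{g,r,p}^{\op{ADHM}}$ in $\ZZ[L,a_1,\ldots,a_g]$ associated with the two expressions.

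Once these two polynomials have been computed, verifying the identity $P_{g,r,p}^{\op{BB}} = P_{g,r,p}^{\op{ADHM}}$ is a routine coefficient-by-coefficient comparison in a polynomial ring over $\ZZ$ in $g+1$ variables, which can be done exactly and with no rounding. If the equality holds for every $(g,r,p)$ in the stated range, then, combining the identity with \cite[Corollary 7.7]{AO21} and Theorems \ref{thm:exprSimplification} and \ref{thm:mainAlgorithm} (which guarantee that the BB polynomial actually evaluates to $[\SM_L(r,d)]$ under the substitution $L\mapsto \LL$, $a_k\mapsto \lambda^k(h^1(X))$), one obtains that Conjecture \ref{conj:Mozgovoy} holds for these parameters.

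The main obstacle is not mathematical but computational. The BB formula involves only a modest number of $\lambda$-operations and is handled easily by Algorithm \ref{alg:motiveSimpAlg}. The ADHM side, however, is substantially heavier: it requires expanding a plethystic logarithm
\[
\op{PLog}(A) = \sum_{j\ge 1} \frac{\mu(j)}{j} \psi_j[\log A]
\]
truncated to the order needed to read off $H_r(t)$, which produces a large nested mixture of Adams operations $\psi_j$ applied to products of motivic zeta functions $Z_X(t^{h(s)}\LL^{a(s)})$ indexed by cells of partitions. The depth of $\lambda$ operations that appear grows with $r$, $p$ and $j$, and the intermediate polynomials in $\ZZ[L,a_1,\ldots,a_g]$ can be enormous before final cancellations occur. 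Thus the practical difficulty is to implement the substitutions and the special-$\lambda$-ring identities of Section \ref{section:lambdaRings} efficiently enough, with sufficient truncation in the power series variable $t$ and the formal variable $T$, so that the algorithm terminates within reasonable resources for all $(g,r,p)$ in the stated grid.

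Finally, one checks termination and correctness bounds: since $H_r(t)$ is conjecturally a polynomial, the series in $T$ need only be computed up to order $T^r$ with $r\le 3$, and each $\SH_n(t)$ is a finite sum over $\SP_n$; the Adams operation depths are bounded explicitly by $\depth$ computations as in Section \ref{section:algorithm}, so all inputs to Algorithm \ref{alg:motiveSimpAlg} have finite, a priori bounded $\lambda$-depth in each coordinate. Running the comparison over the finite grid $2\le g\le 11$, $1\le r\le 3$, $1\le p\le 20$ and observing polynomial equality in every case then yields the theorem.
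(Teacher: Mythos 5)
Your proposal is correct and takes essentially the same approach as the paper: feed the right-hand sides of Theorem \ref{thm:geometricMotive} and Conjecture \ref{conj:Mozgovoy} into Algorithm \ref{alg:motiveSimpAlg}, compare the resulting integer polynomials in $\ZZ[L,a_1,\ldots,a_g]$ coefficient by coefficient over the finite parameter grid, and invoke Theorems \ref{thm:exprSimplification} and \ref{thm:mainAlgorithm} together with \cite[Corollary 7.7]{AO21} to conclude. The paper presents this purely as a computational verification (with no separate proof environment), so your additional remarks about truncation in $t$ and $T$, $\lambda$-depth bounds, and $d$-independence are sensible implementation details rather than deviations from the paper's argument.
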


\begin{remark}
As a consequence of \cite[Theorem 6.7]{AO21}, the resulting polynomials $P_{g,r,p}^{\op{BB}} = P_{g,r,p}^{\op{ADHM}}$ also compute the motives for the corresponding moduli spaces of Lie algebroid connections in rank 2 and 3.
\end{remark}

\section{Annex: Motives of the Moduli space of twisted Higgs bundles in low genus}
This annex contains some examples of simplified polynomial formulas for the motives of the moduli space of $L$-twisted Higgs bundles of rank at most $3$, with $\deg(L)=2g-2+p$ and $p>0$ in low genus. These formulas have been obtained applying the proposed Algorithm \ref{alg:motiveSimpAlg} to Mozovoy's Conjectural formula \ref{conj:Mozgovoy}. The polynomials $P_{g,r,p}=P_{g,r,p}^{\op{ADHM}}=P_{g,r,p}^{\op{BB}}$ depend on $a_n=\lambda^n([h^1(X)])$ and the Lefschetz motive $\LL$. The resulting polynomial has then been factorized to improved readability. Please notice that the equations from Theorem \ref{thm:geometricMotive} imply straightforwardly that the motives of the moduli spaces are multiples of
$$P_X(1)=[\Jac(X)]=\sum_{n=0}^{2g} \lambda^n(h^1(X))=1+a_1+a_2+\ldots+a_g+\LL a_{g-1}+\ldots+\LL^{g-1} a_1+\LL^g$$
This can be observed in all the computed cases.

As these formulas have been computed from Conjecture \ref{conj:Mozgovoy}, they were, in general, just conjectural, but, as stated in Theorem \ref{thm:verification}, they have been computationally verified for $1\le p\le 20$ showing that they agree with the equations from Theorem \ref{thm:geometricMotive}.

\begin{dmath*}
P_{2,1,p}=L^{p+1}\left(L^2+a_{1}\,L+a_{1}+a_{2}+1\right)
\end{dmath*}

\begin{dmath*}
P_{3,1,p}=L^{p+2}\left(L^3+a_{1}\,L^2+a_{2}\,L+a_{1}+a_{2}+a_{3}+1\right)
\end{dmath*}

\begin{dmath*}
P_{4,1,p}=L^{p+3}\left(L^4+a_{1}\,L^3+a_{2}\,L^2+a_{3}\,L+a_{1}+a_{2}+a_{3}+a_{4}+1\right)
\end{dmath*}

\begin{dmath*}
P_{2,2,1}=L^7\left(L^2+a_{1}\,L+a_{1}+a_{2}+1\right)\,\left(2\,L+a_{1}+a_{2}+L\,a_{1}+L^2\,a_{1}+2\,L^2+L^3+L^4+2\right)
\end{dmath*}

\begin{dmath*}
P_{2,2,2}=L^{10}\left(L^2+a_{1}\,L+a_{1}+a_{2}+1\right)\,\left(2\,L+2\,a_{1}+a_{2}+2\,L\,a_{1}+L\,a_{2}+L^2\,a_{1}+L^3\,a_{1}+2\,L^2+2\,L^3+L^4+L^5+2\right)
\end{dmath*}

\begin{dmath*}
P_{2,2,3}=L^{13}\left(L^2+a_{1}\,L+a_{1}+a_{2}+1\right)\,\left(2\,L+2\,a_{1}+2\,a_{2}+3\,L\,a_{1}+L\,a_{2}+2\,L^2\,a_{1}+L^2\,a_{2}+L^3\,a_{1}+L^4\,a_{1}+3\,L^2+2\,L^3+2\,L^4+L^5+L^6+3\right)
\end{dmath*}

\begin{dmath*}
P_{2,2,4}=L^{16}\left(L^2+a_{1}\,L+a_{1}+a_{2}+1\right)\,\left(3\,L+3\,a_{1}+2\,a_{2}+4\,L\,a_{1}+2\,L\,a_{2}+3\,L^2\,a_{1}+L^2\,a_{2}+2\,L^3\,a_{1}+L^3\,a_{2}+L^4\,a_{1}+L^5\,a_{1}+3\,L^2+3\,L^3+2\,L^4+2\,L^5+L^6+L^7+3\right)
\end{dmath*}

\begin{dmath*}
P_{2,3,1}=L^{15}\left(L^2+a_{1}\,L+a_{1}+a_{2}+1\right)\,\left(L^{11}+L^{10}+L^9\,a_{1}+3\,L^9+2\,L^8\,a_{1}+4\,L^8+4\,L^7\,a_{1}+L^7\,a_{2}+7\,L^7+L^6\,{a_{1}}^2+8\,L^6\,a_{1}+L^6\,a_{2}+9\,L^6+L^5\,{a_{1}}^2+12\,L^5\,a_{1}+4\,L^5\,a_{2}+14\,L^5+3\,L^4\,{a_{1}}^2+L^4\,a_{1}\,a_{2}+18\,L^4\,a_{1}+5\,L^4\,a_{2}+15\,L^4+5\,L^3\,{a_{1}}^2+2\,L^3\,a_{1}\,a_{2}+22\,L^3\,a_{1}+9\,L^3\,a_{2}+18\,L^3+6\,L^2\,{a_{1}}^2+4\,L^2\,a_{1}\,a_{2}+24\,L^2\,a_{1}+10\,L^2\,a_{2}+15\,L^2+6\,L\,{a_{1}}^2+5\,L\,a_{1}\,a_{2}+18\,L\,a_{1}+L\,{a_{2}}^2+10\,L\,a_{2}+12\,L+3\,{a_{1}}^2+4\,a_{1}\,a_{2}+9\,a_{1}+{a_{2}}^2+6\,a_{2}+6\right)
\end{dmath*}

\begin{dmath*}
P_{2,3,2}=L^{21}\left(L^2+a_{1}\,L+a_{1}+a_{2}+1\right)\,\left(L^{14}+L^{13}+L^{12}\,a_{1}+3\,L^{12}+2\,L^{11}\,a_{1}+4\,L^{11}+4\,L^{10}\,a_{1}+L^{10}\,a_{2}+7\,L^{10}+L^9\,{a_{1}}^2+8\,L^9\,a_{1}+L^9\,a_{2}+9\,L^9+L^8\,{a_{1}}^2+12\,L^8\,a_{1}+4\,L^8\,a_{2}+15\,L^8+3\,L^7\,{a_{1}}^2+L^7\,a_{1}\,a_{2}+19\,L^7\,a_{1}+5\,L^7\,a_{2}+18\,L^7+5\,L^6\,{a_{1}}^2+2\,L^6\,a_{1}\,a_{2}+28\,L^6\,a_{1}+10\,L^6\,a_{2}+25\,L^6+8\,L^5\,{a_{1}}^2+4\,L^5\,a_{1}\,a_{2}+38\,L^5\,a_{1}+13\,L^5\,a_{2}+28\,L^5+13\,L^4\,{a_{1}}^2+7\,L^4\,a_{1}\,a_{2}+44\,L^4\,a_{1}+L^4\,{a_{2}}^2+19\,L^4\,a_{2}+32\,L^4+16\,L^3\,{a_{1}}^2+12\,L^3\,a_{1}\,a_{2}+50\,L^3\,a_{1}+L^3\,{a_{2}}^2+21\,L^3\,a_{2}+29\,L^3+17\,L^2\,{a_{1}}^2+15\,L^2\,a_{1}\,a_{2}+44\,L^2\,a_{1}+3\,L^2\,{a_{2}}^2+23\,L^2\,a_{2}+27\,L^2+15\,L\,{a_{1}}^2+16\,L\,a_{1}\,a_{2}+32\,L\,a_{1}+4\,L\,{a_{2}}^2+17\,L\,a_{2}+16\,L+6\,{a_{1}}^2+9\,a_{1}\,a_{2}+16\,a_{1}+3\,{a_{2}}^2+12\,a_{2}+10\right)
\end{dmath*}

\begin{dmath*}
P_{2,3,3}=L^{27}\left(L^2+a_{1}\,L+a_{1}+a_{2}+1\right)\,\left(L^{17}+L^{16}+L^{15}\,a_{1}+3\,L^{15}+2\,L^{14}\,a_{1}+4\,L^{14}+4\,L^{13}\,a_{1}+L^{13}\,a_{2}+7\,L^{13}+L^{12}\,{a_{1}}^2+8\,L^{12}\,a_{1}+L^{12}\,a_{2}+9\,L^{12}+L^{11}\,{a_{1}}^2+12\,L^{11}\,a_{1}+4\,L^{11}\,a_{2}+15\,L^{11}+3\,L^{10}\,{a_{1}}^2+L^{10}\,a_{1}\,a_{2}+19\,L^{10}\,a_{1}+5\,L^{10}\,a_{2}+18\,L^{10}+5\,L^9\,{a_{1}}^2+2\,L^9\,a_{1}\,a_{2}+28\,L^9\,a_{1}+10\,L^9\,a_{2}+27\,L^9+8\,L^8\,{a_{1}}^2+4\,L^8\,a_{1}\,a_{2}+40\,L^8\,a_{1}+13\,L^8\,a_{2}+33\,L^8+13\,L^7\,{a_{1}}^2+7\,L^7\,a_{1}\,a_{2}+54\,L^7\,a_{1}+L^7\,{a_{2}}^2+21\,L^7\,a_{2}+43\,L^7+19\,L^6\,{a_{1}}^2+12\,L^6\,a_{1}\,a_{2}+71\,L^6\,a_{1}+L^6\,{a_{2}}^2+26\,L^6\,a_{2}+48\,L^6+27\,L^5\,{a_{1}}^2+18\,L^5\,a_{1}\,a_{2}+82\,L^5\,a_{1}+3\,L^5\,{a_{2}}^2+36\,L^5\,a_{2}+55\,L^5+34\,L^4\,{a_{1}}^2+27\,L^4\,a_{1}\,a_{2}+92\,L^4\,a_{1}+4\,L^4\,{a_{2}}^2+40\,L^4\,a_{2}+53\,L^4+38\,L^3\,{a_{1}}^2+34\,L^3\,a_{1}\,a_{2}+91\,L^3\,a_{1}+7\,L^3\,{a_{2}}^2+45\,L^3\,a_{2}+51\,L^3+38\,L^2\,{a_{1}}^2+38\,L^2\,a_{1}\,a_{2}+78\,L^2\,a_{1}+9\,L^2\,{a_{2}}^2+41\,L^2\,a_{2}+41\,L^2+28\,L\,{a_{1}}^2+33\,L\,a_{1}\,a_{2}+55\,L\,a_{1}+9\,L\,{a_{2}}^2+31\,L\,a_{2}+25\,L+10\,{a_{1}}^2+16\,a_{1}\,a_{2}+25\,a_{1}+6\,{a_{2}}^2+20\,a_{2}+15\right)
\end{dmath*}

\begin{dmath*}
P_{2,3,4}=L^{33}\left(L^2+a_{1}\,L+a_{1}+a_{2}+1\right)\,\left(L^{20}+L^{19}+L^{18}\,a_{1}+3\,L^{18}+2\,L^{17}\,a_{1}+4\,L^{17}+4\,L^{16}\,a_{1}+L^{16}\,a_{2}+7\,L^{16}+L^{15}\,{a_{1}}^2+8\,L^{15}\,a_{1}+L^{15}\,a_{2}+9\,L^{15}+L^{14}\,{a_{1}}^2+12\,L^{14}\,a_{1}+4\,L^{14}\,a_{2}+15\,L^{14}+3\,L^{13}\,{a_{1}}^2+L^{13}\,a_{1}\,a_{2}+19\,L^{13}\,a_{1}+5\,L^{13}\,a_{2}+18\,L^{13}+5\,L^{12}\,{a_{1}}^2+2\,L^{12}\,a_{1}\,a_{2}+28\,L^{12}\,a_{1}+10\,L^{12}\,a_{2}+27\,L^{12}+8\,L^{11}\,{a_{1}}^2+4\,L^{11}\,a_{1}\,a_{2}+40\,L^{11}\,a_{1}+13\,L^{11}\,a_{2}+33\,L^{11}+13\,L^{10}\,{a_{1}}^2+7\,L^{10}\,a_{1}\,a_{2}+54\,L^{10}\,a_{1}+L^{10}\,{a_{2}}^2+21\,L^{10}\,a_{2}+46\,L^{10}+19\,L^9\,{a_{1}}^2+12\,L^9\,a_{1}\,a_{2}+74\,L^9\,a_{1}+L^9\,{a_{2}}^2+26\,L^9\,a_{2}+55\,L^9+27\,L^8\,{a_{1}}^2+18\,L^8\,a_{1}\,a_{2}+96\,L^8\,a_{1}+3\,L^8\,{a_{2}}^2+39\,L^8\,a_{2}+70\,L^8+38\,L^7\,{a_{1}}^2+27\,L^7\,a_{1}\,a_{2}+120\,L^7\,a_{1}+4\,L^7\,{a_{2}}^2+47\,L^7\,a_{2}+78\,L^7+51\,L^6\,{a_{1}}^2+38\,L^6\,a_{1}\,a_{2}+140\,L^6\,a_{1}+7\,L^6\,{a_{2}}^2+62\,L^6\,a_{2}+88\,L^6+62\,L^5\,{a_{1}}^2+52\,L^5\,a_{1}\,a_{2}+158\,L^5\,a_{1}+9\,L^5\,{a_{2}}^2+70\,L^5\,a_{2}+89\,L^5+71\,L^4\,{a_{1}}^2+64\,L^4\,a_{1}\,a_{2}+162\,L^4\,a_{1}+14\,L^4\,{a_{2}}^2+79\,L^4\,a_{2}+90\,L^4+75\,L^3\,{a_{1}}^2+73\,L^3\,a_{1}\,a_{2}+156\,L^3\,a_{1}+17\,L^3\,{a_{2}}^2+78\,L^3\,a_{2}+78\,L^3+67\,L^2\,{a_{1}}^2+71\,L^2\,a_{1}\,a_{2}+128\,L^2\,a_{1}+18\,L^2\,{a_{2}}^2+71\,L^2\,a_{2}+64\,L^2+45\,L\,{a_{1}}^2+56\,L\,a_{1}\,a_{2}+84\,L\,a_{1}+16\,L\,{a_{2}}^2+49\,L\,a_{2}+36\,L+15\,{a_{1}}^2+25\,a_{1}\,a_{2}+36\,a_{1}+10\,{a_{2}}^2+30\,a_{2}+21\right)
\end{dmath*}

\begin{dmath*}
P_{3,2,1}=L^{11}\left(L^3+a_{1}\,L^2+a_{2}\,L+a_{1}+a_{2}+a_{3}+1\right)\,\left(3\,L+2\,a_{1}+2\,a_{2}+a_{3}+2\,L\,a_{1}+2\,L\,a_{2}+L\,a_{3}+2\,L^2\,a_{1}+L^2\,a_{2}+2\,L^3\,a_{1}+L^3\,a_{2}+L^4\,a_{1}+L^5\,a_{1}+3\,L^2+3\,L^3+2\,L^4+2\,L^5+L^6+L^7+3\right)
\end{dmath*}

\begin{dmath*}
P_{3,2,2}=L^{14}\left(L^3+a_{1}\,L^2+a_{2}\,L+a_{1}+a_{2}+a_{3}+1\right)\,\left(3\,L+3\,a_{1}+2\,a_{2}+2\,a_{3}+2\,L\,a_{1}+3\,L\,a_{2}+L\,a_{3}+3\,L^2\,a_{1}+2\,L^2\,a_{2}+2\,L^3\,a_{1}+L^2\,a_{3}+L^3\,a_{2}+2\,L^4\,a_{1}+L^4\,a_{2}+L^5\,a_{1}+L^6\,a_{1}+3\,L^2+3\,L^3+3\,L^4+2\,L^5+2\,L^6+L^7+L^8+3\right)
\end{dmath*}

\begin{dmath*}
P_{3,2,3}=L^{17}\left(L^3+a_{1}\,L^2+a_{2}\,L+a_{1}+a_{2}+a_{3}+1\right)\,\left(3\,L+3\,a_{1}+3\,a_{2}+2\,a_{3}+3\,L\,a_{1}+4\,L\,a_{2}+2\,L\,a_{3}+3\,L^2\,a_{1}+3\,L^2\,a_{2}+3\,L^3\,a_{1}+L^2\,a_{3}+2\,L^3\,a_{2}+2\,L^4\,a_{1}+L^3\,a_{3}+L^4\,a_{2}+2\,L^5\,a_{1}+L^5\,a_{2}+L^6\,a_{1}+L^7\,a_{1}+3\,L^2+4\,L^3+3\,L^4+3\,L^5+2\,L^6+2\,L^7+L^8+L^9+4\right)
\end{dmath*}

\begin{dmath*}
P_{3,2,4}=L^{20}\left(L^3+a_{1}\,L^2+a_{2}\,L+a_{1}+a_{2}+a_{3}+1\right)\,\left(4\,L+4\,a_{1}+3\,a_{2}+3\,a_{3}+3\,L\,a_{1}+5\,L\,a_{2}+2\,L\,a_{3}+5\,L^2\,a_{1}+4\,L^2\,a_{2}+3\,L^3\,a_{1}+2\,L^2\,a_{3}+3\,L^3\,a_{2}+3\,L^4\,a_{1}+L^3\,a_{3}+2\,L^4\,a_{2}+2\,L^5\,a_{1}+L^4\,a_{3}+L^5\,a_{2}+2\,L^6\,a_{1}+L^6\,a_{2}+L^7\,a_{1}+L^8\,a_{1}+3\,L^2+4\,L^3+4\,L^4+3\,L^5+3\,L^6+2\,L^7+2\,L^8+L^9+L^{10}+4\right)
\end{dmath*}

\begin{dmath*}
P_{3,3,1}=L^{24}\left(L^3+a_{1}\,L^2+a_{2}\,L+a_{1}+a_{2}+a_{3}+1\right)\,\left(L^{19}+L^{18}+L^{17}\,a_{1}+3\,L^{17}+2\,L^{16}\,a_{1}+4\,L^{16}+4\,L^{15}\,a_{1}+L^{15}\,a_{2}+7\,L^{15}+L^{14}\,{a_{1}}^2+7\,L^{14}\,a_{1}+L^{14}\,a_{2}+9\,L^{14}+L^{13}\,{a_{1}}^2+11\,L^{13}\,a_{1}+4\,L^{13}\,a_{2}+L^{13}\,a_{3}+14\,L^{13}+3\,L^{12}\,{a_{1}}^2+L^{12}\,a_{1}\,a_{2}+16\,L^{12}\,a_{1}+6\,L^{12}\,a_{2}+L^{12}\,a_{3}+17\,L^{12}+4\,L^{11}\,{a_{1}}^2+2\,L^{11}\,a_{1}\,a_{2}+24\,L^{11}\,a_{1}+11\,L^{11}\,a_{2}+3\,L^{11}\,a_{3}+24\,L^{11}+7\,L^{10}\,{a_{1}}^2+4\,L^{10}\,a_{1}\,a_{2}+L^{10}\,a_{1}\,a_{3}+32\,L^{10}\,a_{1}+16\,L^{10}\,a_{2}+5\,L^{10}\,a_{3}+30\,L^{10}+9\,L^9\,{a_{1}}^2+8\,L^9\,a_{1}\,a_{2}+L^9\,a_{1}\,a_{3}+44\,L^9\,a_{1}+L^9\,{a_{2}}^2+25\,L^9\,a_{2}+8\,L^9\,a_{3}+38\,L^9+15\,L^8\,{a_{1}}^2+12\,L^8\,a_{1}\,a_{2}+4\,L^8\,a_{1}\,a_{3}+56\,L^8\,a_{1}+L^8\,{a_{2}}^2+34\,L^8\,a_{2}+12\,L^8\,a_{3}+45\,L^8+18\,L^7\,{a_{1}}^2+19\,L^7\,a_{1}\,a_{2}+5\,L^7\,a_{1}\,a_{3}+70\,L^7\,a_{1}+3\,L^7\,{a_{2}}^2+L^7\,a_{2}\,a_{3}+47\,L^7\,a_{2}+18\,L^7\,a_{3}+53\,L^7+25\,L^6\,{a_{1}}^2+28\,L^6\,a_{1}\,a_{2}+10\,L^6\,a_{1}\,a_{3}+80\,L^6\,a_{1}+5\,L^6\,{a_{2}}^2+2\,L^6\,a_{2}\,a_{3}+59\,L^6\,a_{2}+23\,L^6\,a_{3}+57\,L^6+28\,L^5\,{a_{1}}^2+38\,L^5\,a_{1}\,a_{2}+13\,L^5\,a_{1}\,a_{3}+90\,L^5\,a_{1}+8\,L^5\,{a_{2}}^2+4\,L^5\,a_{2}\,a_{3}+69\,L^5\,a_{2}+30\,L^5\,a_{3}+60\,L^5+32\,L^4\,{a_{1}}^2+44\,L^4\,a_{1}\,a_{2}+19\,L^4\,a_{1}\,a_{3}+89\,L^4\,a_{1}+13\,L^4\,{a_{2}}^2+7\,L^4\,a_{2}\,a_{3}+76\,L^4\,a_{2}+L^4\,{a_{3}}^2+35\,L^4\,a_{3}+58\,L^4+29\,L^3\,{a_{1}}^2+50\,L^3\,a_{1}\,a_{2}+21\,L^3\,a_{1}\,a_{3}+85\,L^3\,a_{1}+16\,L^3\,{a_{2}}^2+12\,L^3\,a_{2}\,a_{3}+76\,L^3\,a_{2}+L^3\,{a_{3}}^2+36\,L^3\,a_{3}+51\,L^3+27\,L^2\,{a_{1}}^2+44\,L^2\,a_{1}\,a_{2}+23\,L^2\,a_{1}\,a_{3}+67\,L^2\,a_{1}+17\,L^2\,{a_{2}}^2+15\,L^2\,a_{2}\,a_{3}+64\,L^2\,a_{2}+3\,L^2\,{a_{3}}^2+33\,L^2\,a_{3}+39\,L^2+16\,L\,{a_{1}}^2+32\,L\,a_{1}\,a_{2}+17\,L\,a_{1}\,a_{3}+45\,L\,a_{1}+15\,L\,{a_{2}}^2+16\,L\,a_{2}\,a_{3}+47\,L\,a_{2}+4\,L\,{a_{3}}^2+28\,L\,a_{3}+30\,L+10\,{a_{1}}^2+16\,a_{1}\,a_{2}+12\,a_{1}\,a_{3}+25\,a_{1}+6\,{a_{2}}^2+9\,a_{2}\,a_{3}+20\,a_{2}+3\,{a_{3}}^2+15\,a_{3}+15\right)
\end{dmath*}

\begin{dmath*}
P_{3,3,2}=L^{30}\left(L^3+a_{1}\,L^2+a_{2}\,L+a_{1}+a_{2}+a_{3}+1\right)\,\left(L^{22}+L^{21}+L^{20}\,a_{1}+3\,L^{20}+2\,L^{19}\,a_{1}+4\,L^{19}+4\,L^{18}\,a_{1}+L^{18}\,a_{2}+7\,L^{18}+L^{17}\,{a_{1}}^2+7\,L^{17}\,a_{1}+L^{17}\,a_{2}+9\,L^{17}+L^{16}\,{a_{1}}^2+11\,L^{16}\,a_{1}+4\,L^{16}\,a_{2}+L^{16}\,a_{3}+14\,L^{16}+3\,L^{15}\,{a_{1}}^2+L^{15}\,a_{1}\,a_{2}+16\,L^{15}\,a_{1}+6\,L^{15}\,a_{2}+L^{15}\,a_{3}+17\,L^{15}+4\,L^{14}\,{a_{1}}^2+2\,L^{14}\,a_{1}\,a_{2}+24\,L^{14}\,a_{1}+11\,L^{14}\,a_{2}+3\,L^{14}\,a_{3}+24\,L^{14}+7\,L^{13}\,{a_{1}}^2+4\,L^{13}\,a_{1}\,a_{2}+L^{13}\,a_{1}\,a_{3}+32\,L^{13}\,a_{1}+16\,L^{13}\,a_{2}+5\,L^{13}\,a_{3}+30\,L^{13}+9\,L^{12}\,{a_{1}}^2+8\,L^{12}\,a_{1}\,a_{2}+L^{12}\,a_{1}\,a_{3}+44\,L^{12}\,a_{1}+L^{12}\,{a_{2}}^2+25\,L^{12}\,a_{2}+8\,L^{12}\,a_{3}+39\,L^{12}+15\,L^{11}\,{a_{1}}^2+12\,L^{11}\,a_{1}\,a_{2}+4\,L^{11}\,a_{1}\,a_{3}+57\,L^{11}\,a_{1}+L^{11}\,{a_{2}}^2+34\,L^{11}\,a_{2}+12\,L^{11}\,a_{3}+48\,L^{11}+18\,L^{10}\,{a_{1}}^2+19\,L^{10}\,a_{1}\,a_{2}+5\,L^{10}\,a_{1}\,a_{3}+75\,L^{10}\,a_{1}+3\,L^{10}\,{a_{2}}^2+L^{10}\,a_{2}\,a_{3}+48\,L^{10}\,a_{2}+18\,L^{10}\,a_{3}+59\,L^{10}+27\,L^9\,{a_{1}}^2+28\,L^9\,a_{1}\,a_{2}+10\,L^9\,a_{1}\,a_{3}+92\,L^9\,a_{1}+5\,L^9\,{a_{2}}^2+2\,L^9\,a_{2}\,a_{3}+63\,L^9\,a_{2}+24\,L^9\,a_{3}+68\,L^9+33\,L^8\,{a_{1}}^2+40\,L^8\,a_{1}\,a_{2}+13\,L^8\,a_{1}\,a_{3}+112\,L^8\,a_{1}+8\,L^8\,{a_{2}}^2+4\,L^8\,a_{2}\,a_{3}+81\,L^8\,a_{2}+33\,L^8\,a_{3}+78\,L^8+43\,L^7\,{a_{1}}^2+54\,L^7\,a_{1}\,a_{2}+21\,L^7\,a_{1}\,a_{3}+126\,L^7\,a_{1}+13\,L^7\,{a_{2}}^2+7\,L^7\,a_{2}\,a_{3}+99\,L^7\,a_{2}+L^7\,{a_{3}}^2+41\,L^7\,a_{3}+85\,L^7+48\,L^6\,{a_{1}}^2+71\,L^6\,a_{1}\,a_{2}+26\,L^6\,a_{1}\,a_{3}+140\,L^6\,a_{1}+19\,L^6\,{a_{2}}^2+12\,L^6\,a_{2}\,a_{3}+113\,L^6\,a_{2}+L^6\,{a_{3}}^2+50\,L^6\,a_{3}+88\,L^6+55\,L^5\,{a_{1}}^2+82\,L^5\,a_{1}\,a_{2}+36\,L^5\,a_{1}\,a_{3}+142\,L^5\,a_{1}+27\,L^5\,{a_{2}}^2+18\,L^5\,a_{2}\,a_{3}+124\,L^5\,a_{2}+3\,L^5\,{a_{3}}^2+57\,L^5\,a_{3}+87\,L^5+53\,L^4\,{a_{1}}^2+92\,L^4\,a_{1}\,a_{2}+40\,L^4\,a_{1}\,a_{3}+139\,L^4\,a_{1}+34\,L^4\,{a_{2}}^2+27\,L^4\,a_{2}\,a_{3}+127\,L^4\,a_{2}+4\,L^4\,{a_{3}}^2+61\,L^4\,a_{3}+81\,L^4+51\,L^3\,{a_{1}}^2+91\,L^3\,a_{1}\,a_{2}+45\,L^3\,a_{1}\,a_{3}+122\,L^3\,a_{1}+38\,L^3\,{a_{2}}^2+34\,L^3\,a_{2}\,a_{3}+115\,L^3\,a_{2}+7\,L^3\,{a_{3}}^2+58\,L^3\,a_{3}+69\,L^3+41\,L^2\,{a_{1}}^2+78\,L^2\,a_{1}\,a_{2}+41\,L^2\,a_{1}\,a_{3}+94\,L^2\,a_{1}+38\,L^2\,{a_{2}}^2+38\,L^2\,a_{2}\,a_{3}+94\,L^2\,a_{2}+9\,L^2\,{a_{3}}^2+51\,L^2\,a_{3}+51\,L^2+25\,L\,{a_{1}}^2+55\,L\,a_{1}\,a_{2}+31\,L\,a_{1}\,a_{3}+60\,L\,a_{1}+28\,L\,{a_{2}}^2+33\,L\,a_{2}\,a_{3}+68\,L\,a_{2}+9\,L\,{a_{3}}^2+39\,L\,a_{3}+36\,L+15\,{a_{1}}^2+25\,a_{1}\,a_{2}+20\,a_{1}\,a_{3}+36\,a_{1}+10\,{a_{2}}^2+16\,a_{2}\,a_{3}+30\,a_{2}+6\,{a_{3}}^2+24\,a_{3}+21\right)
\end{dmath*}
\bibliographystyle{alpha}
\bibliography{MotiveComputationsAlfaya}

\end{document}